\numberwithin{equation}{section}
\newtheorem{Theorem}{Theorem}[section]
\newtheorem{Definition}[Theorem]{Definition}
\newtheorem{Remark}[Theorem]{Remark}
\renewcommand{\leq}{\leqslant}
\renewcommand{\le}{\leqslant}
\renewcommand{\geq}{\geqslant}
\def\hmath$#1${\texorpdfstring{{\rmfamily\textit{#1}}}{#1}}
\newcommand{\mc}{\mathcal}
\newcommand{\vu}{\mathbf{u}}
\title[]{\sc Compressible Navier--Stokes system with the hard sphere pressure law in an exterior domain}
\author{}
\author{\v S. Ne\v casov\'  a $^*$}
\thanks{$^*$ the corresponding author}
\address{\v S. Ne\v casov\' a 
	\newline \indent
	{Institute of Mathematics of the Czech Academy of Sciences, \newline \indent
		\v Zitn\'a 25, 115 67 Praha 1, Czech Republic.}}
		\email{matus@math.cas.cz}
\author{A. Novotn\' y}	
\address{A. Novotn\' y
	\newline \indent
	{IMATH, EA 2134, Universit\'e de Toulon Var, BP20132, 83957 La Garde, France.}}
		\email{novotny@univ-tln.fr}\thanks{}
		\author{Arnab Roy}
\address{Arnab Roy 
	\newline \indent
	{Institute of Mathematics of the Czech Academy of Sciences, \newline \indent
		\v Zitn\'a 25, 115 67 Praha 1, Czech Republic.}}
		\email{royarnab244@gmail.com}	
\date{\today}
\keywords{Compressible Navier-Stokes system, Hard sphere pressure law, Exterior domain}
\subjclass[2010]{}
\begin{document}

\begin{abstract}
We consider the motion of compressible Navier-Stokes fluids with the hard sphere pressure law around a rigid obstacle when the velocity and the density at infinity are non zero. This kind of pressure model is largely employed in various physical and industrial applications. We prove the existence of weak solution to the system in the exterior domain.
\end{abstract}

\maketitle
{\bf The article was finished shortly after death of A. Novotn\' y. We never forget him.}

\tableofcontents

\section{Introduction} \label{intro}
We consider a bounded domain $\mc{ S}\subset \mathbb{R}^d$, $d=2,3$ of class $C^2$ with
boundary $\partial\mc{S}$. Let us denote the open ball with radius $R$ with the center at the origin by $B_R$ and  without loss of generality let us assume that $\overline{\mc{S}}\subset B_{1/2}$. Let $\Omega$ be an exterior domain given by
\begin{equation*}
\Omega  := \mathbb{R}^d \setminus \overline{\mc{S}},\quad d=2,3. 
\end{equation*}
 We consider the motion of viscous compressible fluid in the exterior domain $\Omega$ around  the obstacle $\mathcal S$. Precisely, the mass density $\rho=\rho(t,x)$ and the velocity $\mathbf{u}=\mathbf{u}(t,x)$ of the fluid satisfy the following system:
\begin{align}
{\partial _t \rho }+ \mbox {div }(\rho \mathbf u)  & = 0 \quad \mbox{in}\quad (0,T)\times {\Omega}\label{eq:mass}\\
\partial _{t}(\rho\mathbf u)+\mbox { div } \left( \rho\mathbf u \otimes \mathbf u\right) +\nabla p(\rho)-
\mbox { div } {\mathbb  S(\nabla \mathbf u)} & =  0
\quad \mbox{in}\quad (0,T)\times {\Omega}\label{eq:mom}\\
 \mathbb {S}(\nabla \vu) &=
\mu(\nabla \vu + \nabla^{\top} \vu)  + \lambda\operatorname{div}\vu \mathbb{I},\quad
 \mu>0, \lambda\geq 0,\label{eq:tensor}\\
\mathbf u & =  0
\quad \mbox{on}\quad (0,T)\times \partial{\mc{S}},\label{eq:bdary}
\end{align}
where $p=p(\rho)$ is the hard sphere pressure. The system is
endowed with the initial conditions
\begin{equation}\label{ini}
\rho(0,x)=\rho_0(x),\quad \rho\vu (0,x)=\mathbf{q}_0 (x),\quad x\in\Omega .
\end{equation}
Since $\Omega$ is an exterior domain, we need to prescribe the behaviour of $(\rho,\mathbf{u})$ at infinity:
\begin{equation}\label{behaviour}
\rho(t,x)  \rightarrow  \rho_{\infty}, \quad \mathbf u(t,x) \rightarrow \mathbf a_{\infty}\quad \mbox{as} \quad |x|\rightarrow\infty,\ (t,x)\in (0,T)\times\Omega,
\end{equation}
where $\mathbf a_{\infty}\in \mathbb R^3$ is a nonzero constant vector and $\rho_{\infty}>0$ is a given positive constant. We assume that the fluid density cannot exceed a limit value $\overline{\rho}>0$ and the hard sphere pressure $p=p(\rho)$, as a function of the density, becomes infinite when the density approaches a finite critical value $\overline{\rho}>0$:
\begin{equation*}
\lim_{\rho\rightarrow \overline{\rho}} p(\rho)=\infty.
\end{equation*}
The above condition of hard sphere model eliminates the possibility of the standard pressure law for the isentropic gases. Specifically, we consider the pressure $p\in C^1[0,\overline{\rho})\cap C^2(0,\overline{\rho})$ satisfies
\begin{equation}\label{p-law}
p(0)=0,\quad p'(\rho)>0\ \forall\ \rho>0,\quad \liminf_{\rho\rightarrow 0}\frac{p'(\rho)}{\rho}>0,\quad p(\rho)\sim _{\rho\rightarrow \overline{\rho}-} |\overline{\rho}-\rho|^{-\beta},\mbox{ for some }\beta > 5/2,
\end{equation}
where without loss of generality, we assume that $1< \overline{\rho}<\infty$ and $a(s)\sim_{s\rightarrow s_0\pm} b(s)$ stands for 
\begin{equation*}
c_1a(s)\leq b(s) \leq c_2 a(s),\mbox{ in a right}(+),\mbox{ left}(-)\mbox{ neighbourhood of }s_0.
\end{equation*}

We study the well-accepted Carnahan-Starling equation of the state (\ref{p-law}). It
is an approximate but quite a good (as explained in \cite{Song}) equation of state for the fluid phase of the hard sphere model. Such model was derived from a quadratic relation between the integer portions of the virial coefficients and their orders. This model is convenient for the initial study of the behavior of dense gases and liquids.  For more details regarding this model and several corrections (Percus-Yevick equation, Kolafa correction, Liu correction), we refer to \cite{Carnahan-Starling, Hongqin,KLM04AEHS,KVB62CPES}.
Similar type of the singular pressure law are considered in many physical models. Let us mention the work by Degond and Hu \cite{MR3020033} and Degond et all \cite{MR2835410} for collective motion. Moreover, the work of Berthelin et al. \cite{MR2438216, MR2366138} for the trafic flow, the paper by Maury \cite{maury2012}
	  concerning the crowd motion models. Similar type of models can be found also in works of Bresch et al. \cite{bresch2014, bresch2017}, Perrin and Zatorska \cite{peza2015}, Bresch, Ne\v casov\' a and Perrin \cite{MR3974475}.

The existence of weak solutions in the case of the barotropic situation is going back to the 
seminal work by Lions \cite{MR1637634} and improved by Feireisl et al. \cite{MR1867887}. The question about the existence of weak solutions of the  hard pressure case in a bounded domain with no-slip boundary conditions were studied recently by Feireisl and Zhang \cite[Section 3]{MR2646821}. The case of  general inflow/outflow was investigated  by Choe, Novotn\' y and Yang \cite{MR3912678}. Weak--strong uniqueness in the case of the hard pressure in periodic spatial domains was shown by Feireisl, Lu and Novotn\' y \cite{FLN}. In our knowledge, there is no available existence result of weak solutions with hard sphere pressure law in the case of an unbounded domain. In the case of barotropic compressible fluid,
the existence of weak solutions in an unbounded domain when the velocity at infinity and the density at infinity are nonzero has been done by Novotn\' y, Stra\v skraba \cite[ Section 7.12.6]{MR2084891} and by Lions \cite[Section 7]{MR1637634}. The case of the motion of the compressible fluids in $\mathbb{R}^3$ around a rotating obstacle where the velocity at infinity is nonzero and parallel to the axis of rotation was shown in the paper by Kra\v cmar, Ne\v casov\' a and Novotn\' y \cite {MR3208793}.

In this work, our aim is to establish the existence of weak solutions to the compressible Navier--Stokes system with hard pressure in the context of exterior domain. The main idea is to use the method of ``invading domains''. The exterior domain $\Omega$ is approximated by invading
domains $\Omega_R=\Omega \cap B_R$ and to begin with, we have to show the existence of solution in these bounded domains. Then we need to find estimates independent of  domains $\Omega_R$ so that we can identify the weak limits of the growing invading domains (as radius $R$ of $B_R$ goes to infinity) via
local weak compactness results \cite[Lemma 6.6]{MR2084891}. We have to use div-curl lemma, effective viscous flux, commutator lemmas, renormalized solutions of the transport equation frequently in our analysis. The complete methodology has been explained in Novo \cite{MR2189672} and in Novotn\' y, Stra\v skraba \cite[ Section 7.12.6]{MR2084891} for the case of compressible barotropic fluid and we have adapted it in this paper for compressible fluid with hard sphere pressure law.


The outline of the paper is as follows. Section \ref{intro} deals with the description of the problem, the meaning of weak solution to the problem and the statement of the main result of the paper. The approximation problem on large balls via a suitable penalization is introduced in Section \ref{Approx}. The penalization uses an auxiliary vector field $\mathbf{u}_{\infty}$ (defined in \eqref{d2}) which is very crucial to achieve the required behaviour of velocity at infinity in the limiting process. Moreover, the existence of such problem is shown and the limit process is performed with the penalization parameter tending to infinity where equi-integrability of the pressure is important to pass the limit in the pressure term. 
Section \ref{proof} is devoted to the proof of \cref{thm:main} where the limit with the radius of large balls tending to infinity is achieved and the method of ``invading domains'' is used. In this step, the special choice of the test functions is crucial to identify the limit of the pressure.


\subsection{Weak formulation and Main result}\label{weak}
We want to define the notion of weak solutions to system \eqref{eq:mass}--\eqref{behaviour} together with the pressure $p(\rho)$ satisfying \eqref{p-law}. Let us denote the open ball with radius $R$ with the center at the origin by $B_R.$
To start with,
without loss of generality, assume that $
\overline  { \mathcal S}\subset B_{1/2}$
 and $B_1\subset
B_R$. We set 
\begin{align}\label{d1}
\mathbf{U}_{\infty} := \begin{cases}
\vu_{\infty}&\mbox{ in }B_1,\\
\mathbf{a}_{\infty}&\mbox{ in }\mathbb{R}^3\setminus B_1,
\end{cases}
\end{align}
where $\vu_{\infty}\in C^{1}_c(\mathbb R^3)$ is such that:
\begin{equation}\label{d2}
\vu_{\infty}= \left \{ \begin{array}{l}
0 \mbox { in } B_{1/2},\\
\mathbf{a}_{\infty } \mbox { in } B_{(3/2)R}\setminus B_1,
\\
\mbox { supp } \vu_{\infty } \subset B_{2R}.
\end{array}
\right. \quad \mbox { div } \vu_{\infty} = 0 \mbox { in }\mathbb R^3.
\end{equation}
The construction of such vector field $\vu_{\infty}$ follows from the explanations \cite[Section 3, page 195]{MR3208793}, \cite[Section 1, page 487--488]{MR2189672}  and the result \cite[Exercise III.3.5, page 176]{MR2808162}.
\begin{Definition}\label{def:bddenergy}
 We say that a couple $(\rho,\vu)$  is a bounded energy weak solution of the problem  \eqref{eq:mass}--\eqref{behaviour} with the pressure law \eqref{p-law} if the following conditions are satisfied:
\begin{itemize}
\item Functions $(\rho,\vu)$ are such that
\begin{equation*}
0\leq \rho < \overline{\rho},\quad E(\rho|\rho_{\infty})\in
L^\infty(0,T;L^1(\Omega)),
\end{equation*}
\begin{equation*}
\rho|\vu-\mathbf{U}_\infty|^2\in L^\infty(0,T;L^1(\Omega)),\quad (\vu-
\mathbf{U}_\infty)\in L^2(0,T;W^{1,2}(\Omega)).
\end{equation*}
In the above
\begin{equation*}
E(\rho|\rho_{\infty}) =
H(\rho)-H'(\rho_{\infty})(\rho-\rho_{\infty})-H(\rho_{\infty}),
\end{equation*}
where
\begin{equation}\label{def:H}
H(\rho) = \rho \int _1^{\rho} \frac{p(s)}{s^2}ds,
\end{equation}

\item The function $\rho\in C_{\rm{weak}}([0,T]; L^1(K))$ for any compact $K\subset\overline\Omega$ and the
equation of continuity \eqref{eq:mass} is satisfied in the weak sense,
\begin{equation}\label{eqf:weakmass}
 \int_\Omega \rho(\tau,\cdot)\varphi(\tau,\cdot)\ {\rm
d}x - \int_\Omega \rho_0(\cdot)\varphi(0,\cdot)\ {\rm
d}x=\int_0^\tau \int_{\Omega}\left(\rho
\partial_t \varphi + \rho\vu \cdot \nabla \varphi\right)
 \ dx\ dt,
\end{equation}
for all $\tau\in [0,T]$ and any test function $\varphi
\in C^1_{c}([0,T] \times \overline{\Omega})$.
\item The linear momentum $\rho\vu\in C_{\rm weak}([0,T], L^{1}(K))$ for
any compact $K\subset\overline \Omega$ and the momentum equation \eqref{eq:mom}
is satisfied in the weak sense
\begin{multline}\label{eqf:weakmom}
\int_\Omega\rho\vu(\tau,\cdot)\cdot\varphi(\tau,\cdot){\rm d} x - \int_\Omega \mathbf{q}_0(\cdot)\cdot\varphi(0,\cdot){\rm d} x  \\
 =\int_0^\tau \int_{\Omega}\Big(
\rho \vu \cdot \partial_t \varphi + \rho \vu
\otimes \vu : \nabla \varphi +
p(\rho)\operatorname{div}\varphi - \mathbb {S}(\nabla \vu) : \nabla \varphi \Big)\ dx \ dt,
\end{multline}
for all $\tau\in [0,T]$ and
 for any test
function $\varphi \in C^1_c([0,T] \times \Omega)$.
\item The following energy inequality holds: for a.e. $\tau\in (0,T)$,
\begin{multline}\label{eqf:ee}
\int_{\Omega}\Big(\frac{1}{2}\rho|\vu-\mathbf {U}_{\infty}|^2 +
E(\rho|\rho_{\infty})\Big)(\tau)\ dx
+ \int_0^\tau\int_{\Omega} \mathbb{S}(\nabla (\vu-\mathbf{U}_{\infty})):\nabla (\vu-\mathbf{U}_{\infty})\ dx\ dt\\
\leq
  \int_{\Omega}\Big(\frac{1}{2}\frac{|\mathbf{q}_0-\rho_{0}\mathbf{U}_{\infty}|^2}{\rho_0} +
E(\rho_0|\rho_{\infty})\Big)\ dx - \int_0^\tau
\int_{B_1\setminus\mc{S}} \rho\vu\cdot\nabla\mathbf {U}_{\infty}
\cdot(\vu - \mathbf{U}_{\infty})\,{ d}x\,{ d}t - \int _0^{ \tau}\int _{B_1\setminus\mc{S}}\mathbb{S}(\nabla\mathbf{U}_{\infty}):\nabla(\vu-\mathbf{U}_\infty)\,{ d} x\, { d}t.
\end{multline}
\end{itemize}
\end{Definition}
\begin{Remark}
 We can use the regularization procedure in the transport theory by DiPerna and Lions \cite{DiPerna1989} to show that if $(\rho,\vu)$ is a bounded energy weak solution of the problem  \eqref{eq:mass}--\eqref{behaviour} according to \cref{def:bddenergy}, then $(\rho,\vu)$ also satisfy a renormalized continuity equation in a weak sense, i.e,
 \begin{equation}\label{eq:renorm}
\partial_t b(\rho) + \operatorname{div}(b(\rho)\vu) + (b'(\rho)-b(\rho))\operatorname{div}\vu=0 \mbox{ in }\, \mc{D}'([0,T)\times {\Omega}) ,
\end{equation} 
for any $b\in C([0,\infty)) \cap C^1((0,\infty))$.
\end{Remark}
We are now in a position to state the main result of the present paper.
\begin{Theorem}\label{thm:main}
Assume that
$0< \rho_{\infty} < \overline{\rho}$, $\mathbf{a}_{\infty}(\neq 0)\in \mathbb{R}^d$, $\mathbf{U}_{\infty}\in C_c^1(\mathbb{R}^d)$ is defined by \eqref{d1} and $\Omega =\mathbb R^d\setminus
\overline{\mathcal S}$, where $\mc{S}\subset \mathbb{R}^d$, $d = 2, 3$ is a bounded domain of class $C^2$. Assume that the pressure satisfies the hypothesis \eqref{p-law}, the initial data have finite energy
\begin{equation}\label{init}
 E(\rho_0|\rho_{\infty}) \in L^1(\Omega),\quad 0\leq \rho_{0} < \overline{\rho},\quad
\frac{|\mathbf{q}_0-\rho_{0}\mathbf{a}_{\infty}|^2}{\rho_0}\mathds{1}_{\{\rho_0 > 0\}}\in L^1(\Omega).
\end{equation}
Then  the problem \eqref{eq:mass}--\eqref{behaviour}  admits at least one renormalized bounded energy weak solution on $(0,T)\times\Omega$.
\end{Theorem}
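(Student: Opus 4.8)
The plan is to combine the method of invading domains with a penalization of the obstacle, following the barotropic scheme of \cite{MR2189672} and \cite[Section 7.12.6]{MR2084891} and grafting onto it the hard-sphere existence theory of Feireisl and Zhang \cite[Section 3]{MR2646821}. For a fixed large $R$ and a penalization parameter $n$, I would first solve the compressible Navier--Stokes system with the pressure \eqref{p-law} on the whole ball $B_R$, with the Dirichlet condition $\vu = \mathbf{a}_\infty$ on $\partial B_R$ (equivalently $\vu - \mathbf{U}_\infty \in W^{1,2}_0(B_R)$, since $\mathbf{U}_\infty = \mathbf{a}_\infty$ there) and the Brinkman-type term $n\,\mathds{1}_{\mc S}(\vu - \mathbf{U}_\infty) = n\,\mathds{1}_{\mc S}\vu$ added to \eqref{eq:mom}, which as $n\to\infty$ will recover the no-slip condition \eqref{eq:bdary} on $\partial\mc S$. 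For fixed $n,R$, existence of a bounded-energy weak solution $(\rho_{n,R},\vu_{n,R})$ follows from the Lions--Feireisl scheme adapted to the singular pressure exactly as in \cite{MR2646821}: Faedo--Galerkin approximation, vanishing artificial viscosity and artificial pressure, and, at each level, the effective viscous flux identity and the renormalized continuity equation; the penalization and the lift $\mathbf{U}_\infty$ contribute only controllable lower-order terms. The role of \eqref{p-law}, in particular $\beta > 5/2$, is that the pressure potential $H$ of \eqref{def:H} forces $0\le\rho_{n,R}<\overline\rho$ a.e.\ and yields equi-integrability of $p(\rho_{n,R})$.

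Next I would extract bounds uniform in $n$ and $R$ from the energy inequality (the analogue of \eqref{eqf:ee} on $B_R$, with the extra nonnegative penalization dissipation on the left-hand side): $E(\rho_{n,R}\,|\,\rho_\infty)$ bounded in $L^\infty(0,T;L^1(B_R))$, $\rho_{n,R}|\vu_{n,R}-\mathbf{U}_\infty|^2$ bounded in $L^\infty(0,T;L^1(B_R))$, and $\nabla(\vu_{n,R}-\mathbf{U}_\infty)$ bounded in $L^2((0,T)\times B_R)$, together with a local pressure estimate obtained by testing with $\varphi = \psi\,\mc B[\psi(\rho_{n,R}^\theta - \langle\rho_{n,R}^\theta\rangle)]$ on a fixed ball; the right-hand side of the energy inequality is controlled because $\nabla\mathbf{U}_\infty$ is supported in the fixed annulus $B_1\setminus B_{1/2}$ and the penalization term is absorbed by the dissipation on the fixed set $\mc S$. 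Letting $n\to\infty$ for fixed $R$ and invoking local compactness, the effective viscous flux identity, the oscillation defect measure and renormalized transport theory as in \cite[Section 3]{MR2646821}, one obtains strong convergence of the density and hence a renormalized bounded-energy weak solution $(\rho_R,\vu_R)$ on $(0,T)\times\Omega_R$, $\Omega_R = \Omega\cap B_R$, with $\vu_R = 0$ on $\partial\mc S$, $\vu_R = \mathbf{a}_\infty$ on $\partial B_R$ and the energy inequality \eqref{eqf:ee} on $\Omega_R$; crucially, all the bounds above survive this limit and are uniform in $R$.

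For the limit $R\to\infty$, fix an arbitrary radius $M$. On $\Omega_M$ the uniform estimates give, along a subsequence, $\rho_R\to\rho$ weakly-$*$ in $L^\infty((0,T)\times\Omega_M)$, $\vu_R - \mathbf{U}_\infty \rightharpoonup \vu - \mathbf{U}_\infty$ in $L^2(0,T;W^{1,2})$, and, via \eqref{eqf:weakmass}, Aubin--Lions and \cite[Lemma 6.6]{MR2084891}, $\rho_R\to\rho$ in $C_{\mathrm{weak}}([0,T];L^1(\Omega_M))$ and $\rho_R\vu_R\to\rho\vu$ in $C_{\mathrm{weak}}([0,T];L^{2d/(d+1)}(\Omega_M))$ and in $L^2(0,T;W^{-1,2}(\Omega_M))$, which identifies $\rho_R\vu_R\otimes\vu_R\rightharpoonup\rho\vu\otimes\vu$. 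A diagonal argument over $M\to\infty$ then yields $(\rho,\vu)$ on $(0,T)\times\Omega$ satisfying \eqref{eqf:weakmass} and satisfying \eqref{eqf:weakmom} with $p(\rho)$ replaced by the weak limit $\overline{p(\rho)}$; the far-field behaviour \eqref{behaviour} and the energy inequality \eqref{eqf:ee} follow from the uniform bounds and weak lower semicontinuity, and the renormalized identity \eqref{eq:renorm} will follow once the density is shown to converge strongly.

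The main obstacle is to prove $\overline{p(\rho)} = p(\rho)$, i.e.\ $\rho_R\to\rho$ a.e.\ in $(0,T)\times\Omega$. On each $\Omega_M$ I would establish the local effective viscous flux identity by testing the momentum equations for $\rho_R$ and for the limit with $\varphi = \psi\,\nabla\Delta^{-1}\big[\mathds{1}_{\Omega_M}T_k(\rho_R)\big]$ and $\varphi = \psi\,\nabla\Delta^{-1}\big[\mathds{1}_{\Omega_M}T_k(\rho)\big]$ (with $\psi$ a spatial cutoff and $T_k$ a truncation), passing to the limit via the div--curl lemma and the Feireisl commutator lemmas, which gives locally $\overline{p(\rho)T_k(\rho)} - (\lambda+2\mu)\overline{T_k(\rho)\operatorname{div}\vu} = \overline{p(\rho)}\,\overline{T_k(\rho)} - (\lambda+2\mu)\overline{T_k(\rho)}\operatorname{div}\vu$. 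Combining this with the renormalized continuity equation for $\rho_R$ and for $\rho$ controls the oscillation defect measure $\mathrm{osc}_q[\rho_R\to\rho]$ on $\Omega_M$; here the hard-sphere growth $\beta>5/2$, hence the improved integrability of $p(\rho_R)$, is precisely what is needed both to close this estimate and to justify renormalization with the unbounded nonlinearity $L_k(\rho) = \rho\int_1^\rho T_k(s)s^{-2}\,\mathrm{d}s$. Monotonicity of $p$ gives $\overline{p(\rho)T_k(\rho)}\ge\overline{p(\rho)}\,\overline{T_k(\rho)}$, whence $\overline{T_k(\rho)\operatorname{div}\vu}\ge\overline{T_k(\rho)}\operatorname{div}\vu$, and inserting this into the renormalized equation for $L_k$ shows that $\tau\mapsto\int_{\Omega_M}(\overline{L_k(\rho)} - L_k(\rho))(\tau,\cdot)\,\mathrm{d}x$ is non-increasing and vanishes at $\tau=0$; letting $k\to\infty$ yields $\overline{\rho\log\rho} = \rho\log\rho$, hence $\rho_R\to\rho$ a.e.\ on $\Omega_M$, and then on $\Omega$ by exhaustion. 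The difficulty specific to the exterior setting --- and the reason the cutoffs $\psi$ and the reference value $\rho_\infty$ must be chosen carefully --- is that the Bogovskii operator $\mc B$ and $\nabla\Delta^{-1}$ are not available globally on $\Omega$; one therefore runs these arguments on the expanding sets $\Omega_M$, working with the quantities $\rho_R - \rho_\infty$, $p(\rho_R) - p(\rho_\infty)$ so that they decay, and controls the boundary and far-field contributions using that $E(\rho_R\,|\,\rho_\infty)\in L^\infty(0,T;L^1(\Omega))$ forces $\rho_R\to\rho_\infty$ at spatial infinity in an integrated sense. Once $\rho_R\to\rho$ a.e.\ is established, equi-integrability gives $\overline{p(\rho)} = p(\rho)$, all remaining nonlinearities are identified, and $(\rho,\vu)$ is the desired renormalized bounded-energy weak solution.
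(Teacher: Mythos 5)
Your proposal follows essentially the same architecture as the paper: penalized problems on expanding balls solved via the Lions--Feireisl scheme adapted to the hard-sphere pressure as in Feireisl--Zhang, uniform energy and local pressure estimates via Bogovskii, and identification of the limit pressure through the effective viscous flux and renormalized transport. Two minor structural differences are unproblematic: you penalize only $\mathcal S$ and impose $\vu=\mathbf a_\infty$ on $\partial B_R$, whereas the paper works on $B_{2R}$ with homogeneous Dirichlet data and penalizes both $\mathcal S$ and the outer shell $B_{2R}\setminus B_R$ toward the divergence-free lift $\vu_\infty$ of \eqref{d2} --- the latter avoids inhomogeneous boundary data when invoking the Feireisl--Zhang existence theory. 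You also invoke the truncation $T_k$ and the oscillation defect measure; in the hard-sphere setting $\rho$ is uniformly bounded below $\overline\rho$, so this machinery is unnecessary and the paper works directly with $b(\rho)=\rho\log\rho$ and the untruncated flux identity \eqref{eqf:EVF}. That is merely a simplification you could have used.

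The genuine gap lies in the final and most delicate step, proving $\overline{p(\rho)}=p(\rho)$ after $R\to\infty$. Subtracting the renormalized continuity equations gives (schematically)
\begin{equation*}
\int_\Omega\bigl(\overline{\rho\log\rho}-\rho\log\rho\bigr)(\tau)\,\Phi\,\rd x
\;\le\;\int_0^\tau\!\!\int_\Omega\bigl(\overline{\rho\log\rho}-\rho\log\rho\bigr)\,\vu\cdot\nabla\Phi\,\rd x\,\rd t
\end{equation*}
for any spatial cutoff $\Phi$. In a bounded domain the right-hand side disappears; in the exterior domain it must be driven to zero as the cutoff fills $\Omega$. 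You wave at this with ``controls the boundary and far-field contributions using that $E(\rho_R|\rho_\infty)\in L^\infty(0,T;L^1(\Omega))$ forces $\rho_R\to\rho_\infty$ at spatial infinity,'' but that estimate is not enough: with a radial cutoff $\Phi_R(x)=\eta(|x|/R)$ one has $\|\nabla\Phi_R\|_\infty\sim R^{-1}$ over a shell of volume $\sim R^3$, and since $\vu\to\mathbf a_\infty\neq0$, the dominant term $\int(\rho_\infty\log\rho_\infty-\rho\log\rho)\,\mathbf a_\infty\cdot\nabla\Phi_R$ is bounded only by $\|\rho-\rho_\infty\|_{L^2}\,\|\nabla\Phi_R\|_{L^2}\sim R^{1/2}$, which diverges. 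The paper's resolution --- and the one new idea specific to this theorem --- is the anisotropic cylindrical cutoff $\Phi_R(x)=\eta(x_1/R)\,\zeta(x'/R^\alpha)$ of \eqref{def:PhiR1}--\eqref{def:PhiR2} with $\mathbf a_\infty=(a_\infty,0,0)$ aligned to $x_1$ and $\alpha\in(1/4,1/2)$. Then $\mathbf a_\infty\cdot\nabla\Phi_R=a_\infty\,\partial_{x_1}\Phi_R$ has sup $\sim R^{-1}$ on a support of volume only $\sim R^{1+2\alpha}$, giving a contribution $\sim R^{(2\alpha-1)/2}\to0$, while the fluctuation $\vu-\mathbf U_\infty\in L^2(0,T;W^{1,2})$ pairs with $\nabla\Phi_R$ in $L^6$ to give $\sim R^{(1-4\alpha)/6}\to0$. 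Without this (or some equivalent device) the argument stalls at \eqref{rhofinal1}, and the hypothesis $\mathbf a_\infty\neq0$ that appears in the theorem statement never enters your proof --- a sign that the real difficulty has been bypassed.
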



\section{Approximate problems in bounded domain}\label{Approx}
In this section, in order to solve system \eqref{eq:mass}--\eqref{behaviour}, we want to propose some approximate problems in a bounded domain and to analyze the well-posedness of such problems.
\subsection{Existence of a penalized problem}
Let us denote $V:=B_{2R}$. In order to construct solutions to \cref{thm:main}, we start with the following penalized problem:
\begin{align}
{\partial _t \rho }+ \mbox {div }(\rho \mathbf u)  & = 0 \quad \mbox{in}\quad (0,T)\times {V}\label{eq:penmass},\\
\partial _{t}(\rho\mathbf u)+\mbox { div } \left( \rho\mathbf u \otimes \mathbf u\right) +\nabla p(\rho)-
\mbox { div } {\mathbb  S(\nabla \mathbf u)} + m\mathds{1}_{\{(V\setminus B_R)\cup \mc{S}\}}(\vu-\vu_{\infty}) & =  0
\quad \mbox{in}\quad (0,T)\times {V}\label{eq:penmom},\\
\mathbf u & =  0
\quad \mbox{on}\quad (0,T)\times \partial{V},\label{eq:penbdary}
\end{align}
\begin{equation}\label{penini}
\rho(0,x)=\rho_0(x),\quad \rho\vu (0,x)=\mathbf{q}_0 (x),\quad x\in V .
\end{equation}
In the above, the initial data $\rho_0$ and $\vu_0$ have been extended by zero in $\mc{S}$. 
\begin{Definition}\label{def:penbddenergy}
 We say that a couple $(\rho_m,\vu_m)$  is a bounded energy weak solution of the problem  \eqref{eq:penmass}--\eqref{penini} with \eqref{p-law} if the following conditions are satisfied:
\begin{itemize}
\item Functions $(\rho_m,\vu_m)$ are such that
\begin{equation*}
0\leq \rho_m < \overline{\rho},\quad E(\rho_m|\rho_{\infty})\in
L^\infty(0,T;L^1(V)),
\end{equation*}
\begin{equation*}
\rho_m|\vu_m-\mathbf{u}_\infty|^2\in L^\infty(0,T;L^1(V)),\quad (\vu_m-
\mathbf{u}_\infty)\in L^2(0,T;W_0^{1,2}(V)).
\end{equation*}

\item The function $\rho_m\in C_{\mbox{weak}}([0,T]; L^1(V))$ and the
equation of continuity \eqref{eq:penmass} is satisfied in the weak sense,
\begin{equation}\label{eq:weakpenmass}
 \int_V \rho_m(\tau,\cdot)\varphi(\tau,\cdot)\ {\rm
d}x - \int_V \rho_0(\cdot)\varphi(0,\cdot)\ {\rm
d}x=\int_0^\tau \int_{V}\left(\rho_m
\partial_t \varphi + \rho_m\vu_m \cdot \nabla \varphi\right)
 \ dx\ dt,
\end{equation}
for all $\tau\in [0,T]$ and any test function $\varphi
\in C^1_{c}([0,T] \times \overline{V})$.
\item The linear momentum $\rho_m\vu_m\in C_{\rm weak}([0,T], L^{1}(V))$ and the momentum equation \eqref{eq:penmom}
is satisfied in the weak sense
\begin{multline}\label{eq:weakpenmom}
\int_V\rho_m\vu_m(\tau,\cdot)\cdot\varphi(\tau,\cdot){\rm d} x - \int_V \mathbf{q}_0(\cdot)\cdot\varphi(0,\cdot){\rm d} x  \\
 =\int_0^\tau \int_{V}\Big(
\rho_m \vu_m \cdot \partial_t \varphi + \rho_m \vu_m
\otimes \vu_m : \nabla \varphi +
p(\rho_m)\operatorname{div}\varphi - \mathbb {S}(\nabla \vu_m) : \nabla \varphi - m\mathds{1}_{\{(V\setminus B_R)\cup \mc{S}\}}(\vu_m-\vu_{\infty})\cdot \varphi \Big)\ dx \ dt,
\end{multline}
for all $\tau\in [0,T]$ and
 for any test
function $\varphi \in C^1_c([0,T] \times V)$.
\item The following energy inequality holds: for a.e. $\tau\in (0,T)$,
\begin{multline}\label{pen:energy}
\int_{V}\Big(\frac{1}{2}\rho_m|\vu_m-\mathbf {u}_{\infty}|^2 +
E(\rho_m|\rho_{\infty})\Big)(\tau)\ dx
+ \int_0^\tau\int_{V}\Big( \mathbb{S}(\nabla (\vu_m-\vu_{\infty})):\nabla (\vu_m-\vu_{\infty})  +m\mathds{1}_{\{(V\setminus B_R)\cup \mc{S}\}} |\vu_m-\vu_{\infty}|^2\Big)\ dx\ dt\\
\leq
  \int_{V}\Big(\frac{1}{2}\frac{|\mathbf{q}_0-\rho_{0}\mathbf{u}_{\infty}|^2}{\rho_0} +
E(\rho_0|\rho_{\infty})\Big)\ dx + \int_0^\tau
\int_{V} \rho_m\vu_m\cdot\nabla\mathbf {u}_{\infty}
\cdot(\mathbf{u}_{\infty}- \vu_m)\,{ d}x\,{ d}t - \int _0^{ \tau}\int _V\mathbb{S}(\nabla\vu_{\infty}):\nabla(\vu_m-\vu_\infty)\,{ d} x\, { d}t.
\end{multline}
\end{itemize}
\end{Definition}
\begin{Remark}
 We can use the regularization procedure in the transport theory by DiPerna and Lions \cite{DiPerna1989} to show that if $(\rho_m,\vu_m)$ is a bounded energy weak solution of the problem  \eqref{eq:penmass}--\eqref{penini} according to \cref{def:penbddenergy}, then $(\rho_m,\vu_m)$ also satisfy a renormalized continuity equation in a weak sense, i.e,
 \begin{equation}\label{eq:penrenorm}
\partial_t b(\rho_m) + \operatorname{div}(b(\rho_m)\vu_m) + (b'(\rho_m)-b(\rho_m))\operatorname{div}\vu_m=0 \mbox{ in }\, \mc{D}'([0,T)\times \overline{V}) ,
\end{equation} 
for any $b\in C([0,\infty)) \cap C^1((0,\infty))$.
\end{Remark}
We can prove an existence result to the penalized problem \eqref{eq:penmass}--\eqref{penini} by following the idea of Feireisl, Zhang \cite[Theorem 3.1]{MR2646821}:
\begin{Theorem}\label{thm:pen}
Assume that
$0<\rho_{\infty}<\overline{\rho}$, $\mathbf{u}_{\infty}\in C_c^1(\mathbb{R}^d)$ is defined by \eqref{d2}. Assume that the pressure satisfies the hypothesis \eqref{p-law}, the initial data satisfy
\begin{equation}\label{ini:m}
 E(\rho_0|\rho_{\infty}) \in L^1(V),\quad 0\leq \rho_{0} < \overline{\rho},\quad
\frac{|\mathbf{q}_0-\rho_{0}\mathbf{a}_{\infty}|^2}{\rho_0}\mathds{1}_{\{\rho_0 > 0\}}\in L^1(V).
\end{equation}
Then  the problem \eqref{eq:penmass}--\eqref{penini}  admits at least one renormalized bounded energy weak solution $(\rho_m,\vu_m)$ on $(0,T)\times V$.
\end{Theorem}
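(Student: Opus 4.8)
The plan is to establish \cref{thm:pen} through the standard multi-level approximation scheme of Lions \cite{MR1637634} and Feireisl et al.\ \cite{MR1867887}, in the form adapted to the singular hard sphere pressure by Feireisl and Zhang \cite[Theorem 3.1]{MR2646821}; since $m$ is kept fixed, the penalization $m\mathds{1}_{\{(V\setminus B_R)\cup\mc S\}}(\vu-\vu_\infty)$ is merely a bounded, lower order perturbation that is carried along at each level and enters the energy balance with a favourable sign. As a preliminary step, for $\delta>0$ one replaces the pressure $p$, which is only defined on $[0,\overline\rho)$ and blows up at $\overline\rho$, by a function $p_\delta\in C^1[0,\infty)$ that coincides with $p$ on $[0,\overline\rho-\delta]$, is non-decreasing, and is augmented by an artificial term $\delta\rho^\Gamma$ with $\Gamma$ sufficiently large; the associated potential $H_\delta$ is defined through \eqref{def:H}. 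Simultaneously one regularizes the initial data $(\rho_0,\mathbf q_0)$ by smooth fields bounded away from $0$ and $\overline\rho$, with approximate energies converging to the expressions in \eqref{ini:m}. The construction then proceeds by successively sending the Galerkin dimension $n\to\infty$, the artificial viscosity $\varepsilon\to0$, and finally $\delta\to0$.

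At the innermost level one solves, for $n\in\mathbb N$ and $\varepsilon>0$, the continuity equation with artificial viscosity $\partial_t\rho+\div(\rho\vu)=\varepsilon\Delta\rho$ with homogeneous Neumann conditions, coupled with the Galerkin projection (onto finitely many smooth basis fields vanishing on $\partial V$) of the momentum equation with pressure $p_\delta(\rho)+\delta\rho^\Gamma$, viscous stress $\mathbb S(\nabla\vu)$, convective term and penalization term; a fixed point argument on a short time interval together with the energy estimate extends the solution to $[0,T]$, and at this level the density stays in a compact subinterval of $(0,\overline\rho)$ by parabolic regularity and comparison. The limit $n\to\infty$ is routine (energy bounds, parabolic compactness of $\rho_n$, compactness of $\rho_n\vu_n$, and the penalization term passes trivially, being affine in $\vu$). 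In the limit $\varepsilon\to0$ one invokes the effective viscous flux identity together with the renormalized continuity equation --- legitimate since $\rho_\varepsilon\in L^2$ thanks to the $\delta\rho^\Gamma$ term --- and the monotonicity of $s\mapsto p_\delta(s)+\delta s^\Gamma$, via Feireisl's oscillation defect measure, to upgrade the weak convergence of $\rho_\varepsilon$ to strong convergence; this yields a renormalized bounded energy weak solution $(\rho_\delta,\vu_\delta)$ of the $\delta$-regularized problem.

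The decisive step is the passage $\delta\to0$. From the energy inequality one obtains, uniformly in $\delta$, that $E(\rho_\delta|\rho_\infty)$ is bounded in $L^\infty(0,T;L^1(V))$, that $\vu_\delta-\vu_\infty$ is bounded in $L^2(0,T;W^{1,2}_0(V))$, and that $\sqrt m\,\mathds{1}_{\{(V\setminus B_R)\cup\mc S\}}(\vu_\delta-\vu_\infty)$ is bounded in $L^2((0,T)\times V)$. Because $H$, and hence $E(\cdot|\rho_\infty)$, is superlinear as the argument tends to $\overline\rho$ (a consequence of $\beta>1$), this $L^1$ bound forbids concentration of mass at the critical density, so the weak limit $\rho$ satisfies $0\le\rho\le\overline\rho$ a.e.\ and $\delta\rho_\delta^\Gamma\to0$ in $L^1$. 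The crucial additional ingredient is the improved pressure estimate obtained by testing the momentum equation with the Bogovskii-type field $\varphi=\mathcal B\big[\rho_\delta^\theta-\tfrac1{|V|}\int_V\rho_\delta^\theta\,dx\big]$ for a small exponent $\theta>0$: using the $L^s\to W^{1,s}$ continuity of $\mathcal B$ and the uniform energy bounds (which in particular control the penalization contribution $\int_0^T\!\!\int_V m\,\mathds{1}_{\{(V\setminus B_R)\cup\mc S\}}(\vu_\delta-\vu_\infty)\cdot\mathcal B[\cdot]\,dx\,dt$), one bounds $\int_0^T\!\!\int_V p_\delta(\rho_\delta)\,\rho_\delta^\theta\,dx\,dt$ uniformly in $\delta$. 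Since $p(\rho)\sim|\overline\rho-\rho|^{-\beta}$ with $\beta>5/2$, this $L^{1+\theta}$-type control forces equi-integrability of $\{p_\delta(\rho_\delta)\}$. Strong $L^1$ convergence of $\rho_\delta$ then follows once more from the effective viscous flux identity --- now with the monotone limit pressure $p$ --- together with the renormalized continuity equation, the oscillation defect measure being dominated by the equi-integrable pressure. Vitali's lemma gives $p_\delta(\rho_\delta)\rightharpoonup p(\rho)$ weakly in $L^1$ and, in particular, $\rho<\overline\rho$ a.e.; passing to the limit in the remaining affine or weakly continuous terms and re-deriving the renormalized equation by DiPerna--Lions \cite{DiPerna1989} completes the argument.

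The main obstacle is this last step: securing the $\delta$-uniform improved integrability of the pressure in a neighbourhood of the singular value $\overline\rho$, which is what simultaneously delivers the equi-integrability of $p_\delta(\rho_\delta)$ and the strict bound $\rho<\overline\rho$, i.e.\ the absence of a saturated region $\{\rho=\overline\rho\}$ of positive measure in the limit. It is precisely here that the restriction $\beta>5/2$ in \eqref{p-law} enters.
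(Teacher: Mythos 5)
Your overall structure mirrors the paper's: regularize the singular hard-sphere pressure into a barotropic-type pressure defined on all of $[0,\infty)$, construct solutions of the regularized problem, then pass the regularization parameter to zero using the energy bound, improved pressure integrability, and the effective viscous flux. The paper is terser — it posits the regularization \eqref{reg:pre}, invokes \cite[Theorem 7.79]{MR2084891} for the existence of $(\rho_\varepsilon,\vu_\varepsilon)$ (so it does not rebuild the Galerkin and artificial-viscosity levels, which you do explicitly), and then cites \cite[Section 3]{MR2646821} for the $\varepsilon\to0$ passage — but the architecture is the same, and your treatment of the penalization as a dissipative, affine, lower-order term carried through each level is correct.

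There is, however, a genuine gap in your $\delta\to0$ step: the improved pressure estimate you propose does not deliver equi-integrability in the hard-sphere setting. You test the momentum equation with $\varphi=\mathcal B\bigl[\rho_\delta^\theta - \tfrac1{|V|}\int_V\rho_\delta^\theta\,dx\bigr]$ and claim that the resulting bound on $\int p_\delta(\rho_\delta)\,\rho_\delta^\theta$ ``forces equi-integrability of $\{p_\delta(\rho_\delta)\}$.'' This is the isentropic bootstrap, and it works there because the pressure blows up as $\rho\to\infty$ and $\rho^\theta$ grows with it, yielding genuine extra integrability. Here the singularity is at the \emph{finite} threshold $\overline\rho$, the densities of interest are confined to a neighbourhood of $\overline\rho$, and $\rho^\theta\approx\overline\rho^{\,\theta}$ is merely a bounded constant there; so the bound on $\int p_\delta(\rho_\delta)\,\rho_\delta^\theta$ gives nothing beyond the $L^1$ bound and does not prevent $p_\delta(\rho_\delta)$ from concentrating on the set where $\rho_\delta$ approaches $\overline\rho$. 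What is needed is a test function whose density-dependent factor \emph{blows up} at $\overline\rho$; the paper (in Step 3 of the proof of \cref{thm:m}, the analogue of this passage) and Feireisl--Zhang use $b(\rho)=\log(\overline\rho-\rho)$ truncated at $\overline\rho-\delta$ (cf.\ \eqref{def:b}), tested via $\mathcal B[\psi\,b(\rho)-\text{mean}]$. The resulting bound on $\int p(\rho)\,|b(\rho)|$ is the correct equi-integrability criterion (de la Vallée-Poussin), and it is in controlling the Bogovskii remainders of $b$ and of $\rho b'(\rho)-b(\rho)\sim(\overline\rho-\rho)^{-1}$ against the $L^1$ bound on $E(\rho|\rho_\infty)\sim(\overline\rho-\rho)^{-(\beta-1)}$ that the restriction $\beta>5/2$ is actually used. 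As written, your argument invokes $\beta>5/2$ but the logical chain through $\rho^\theta$ does not use it, and the conclusion does not follow; replacing $\rho^\theta$ by the truncated logarithm repairs the step.
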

\begin{proof}
We consider a family of solutions $(\rho_{\varepsilon},\vu_{\varepsilon})$ of an approximate problem with regularized pressure $p_{\varepsilon}(\rho)$:
\begin{align}
{\partial _t \rho_{\varepsilon} }+ \mbox {div }(\rho_{\varepsilon} \mathbf u_{\varepsilon})  & = 0 \quad \mbox{in}\quad (0,T)\times {V}\label{eq:epmass},\\
\partial _{t}(\rho_{\varepsilon}\mathbf u_{\varepsilon})+\mbox { div } \left( \rho_{\varepsilon}\mathbf u_{\varepsilon} \otimes \mathbf u_{\varepsilon}\right) +\nabla p_{\varepsilon}(\rho_{\varepsilon})-
\mbox { div } {\mathbb  S(\nabla \mathbf u_{\varepsilon})} + m\mathds{1}_{\{(V\setminus B_R)\cup \mc{S}\}}(\vu_{\varepsilon}-\vu_{\infty}) & =  0
\quad \mbox{in}\quad (0,T)\times {V}\label{eq:epmom},\\
\mathbf u_{\varepsilon} & =  0
\quad \mbox{on}\quad (0,T)\times \partial{V},\label{eq:epbdary}
\end{align}
\begin{equation}\label{epini}
\rho_{\varepsilon}(0,x)=\rho_0(x),\quad \rho_{\varepsilon}\vu_{\varepsilon} (0,x)=(\rho\vu)_0 (x),\quad x\in V ,
\end{equation}
where regularized pressure $p_{\varepsilon}$ is given by 
\begin{align}\label{reg:pre}
p_{\varepsilon}(\rho)=\begin{cases}
p(\rho)&\mbox{ for }\rho \in [0,\overline{\rho}-\varepsilon]\\
p(\overline{\rho}-\varepsilon) + |(\rho - \overline{\rho}+\varepsilon)^{+}|^{\gamma}&\mbox{ for }\rho \in (\overline{\rho}-\varepsilon,\infty),
\end{cases}
\end{align}
for a certain exponent $\gamma>d$ (which is chosen sufficiently large).
The idea is to establish the existence of the problem \eqref{eq:penmass}--\eqref{penini} as an asymptotic limit of the family $(\rho_{\varepsilon},\vu_{\varepsilon})$ as $\varepsilon\rightarrow 0$. Under the assumptions \eqref{init} on initial data and $\vu_{\infty}$ \eqref{d2}, the problem \eqref{eq:epmass}--\eqref{epini} with regularized pressure law \eqref{reg:pre} admits at least one weak solution $(\rho_{\varepsilon},\vu_{\varepsilon})$ by following the idea of \cite[Theorem 7.79, Page 425]{MR2084891}. Then we can follow \cite[Section 3]{MR2646821} to obtain the uniform bounds (with respect to $\varepsilon$) of the density $\{{\rho}_{\varepsilon}\}$, velocity $\{\vu_{\varepsilon}\}$, the pressure $\{p_{\varepsilon}\}$ and the equi-integrability of the pressure family so that we can pass the limit $\varepsilon \rightarrow 0$ and most importantly, we can conclude 
\begin{equation*}
p_{\varepsilon}({\rho}_{\varepsilon})\rightarrow p(\rho)\mbox{ in }L^1((0,T)\times V).
\end{equation*}
\end{proof}

 \subsection{Limit \texorpdfstring{$m\rightarrow \infty$}{}}
 
 Let us denote $\Omega_R:= B_R \setminus \overline{\mc{S}}$ and we consider the following system:
 \begin{align}
{\partial _t \rho }+ \mbox {div }(\rho \mathbf u)  & = 0 \quad \mbox{in}\quad (0,T)\times {V}\label{eq:mmass},\\
\partial _{t}(\rho\mathbf u)+\mbox { div } \left( \rho\mathbf u \otimes \mathbf u\right) +\nabla p(\rho)-
\mbox { div } {\mathbb  S(\nabla \mathbf u)} & =  0
\quad \mbox{in}\quad (0,T)\times \Omega_R \label{eq:mmom},\\
\mathbf u & =  0
\quad \mbox{on}\quad (0,T)\times \partial\mc{S},\label{eq:mbdary}\\
\vu &=\vu_{\infty} \mbox{ a.e. in }(0,T)\times [(V\setminus B_R)\cap \mc{S}],
\end{align}
\begin{equation}\label{mini}
\rho(0,x)=\rho_0(x),\quad \rho\vu (0,x)=\mathbf{q}_0 (x),\quad x\in V .
\end{equation}
 \begin{Definition}\label{def:mbddenergy}
 We say that a couple $(\rho_R,\vu_R)$  is a bounded energy weak solution of the problem  \eqref{eq:mmass}--\eqref{mini} with \eqref{p-law} if the following conditions are satisfied:
\begin{itemize}
\item Functions $(\rho_R,\vu_R)$ are such that
\begin{equation*}
0\leq \rho_R < \overline{\rho},\quad E(\rho_R|\rho_{\infty})\in
L^\infty(0,T;L^1(\Omega_R)),
\end{equation*}
\begin{equation*}
\rho_R|\vu_R-\mathbf{u}_\infty|^2\in L^\infty(0,T;L^1(\Omega_R)),\quad (\vu_R-
\mathbf{u}_\infty)\in L^2(0,T;W_0^{1,2}(\Omega_R)).
\end{equation*}

\item The function $\rho_R\in C_{\mbox{weak}}([0,T]; L^1(V))$ and the
equation of continuity \eqref{eq:mmass} is satisfied in the weak sense,
\begin{equation}\label{eq:weakmmass}
 \int_V \rho_R(\tau,\cdot)\varphi(\tau,\cdot)\ {\rm
d}x - \int_V \rho_0(\cdot)\varphi(0,\cdot)\ {\rm
d}x=\int_0^\tau \int_{V}\left(\rho_R
\partial_t \varphi + \rho_R\vu_R \cdot \nabla \varphi\right)
 \ dx\ dt,
\end{equation}
for all $\tau\in [0,T]$ and any test function $\varphi
\in C^1_{c}([0,T] \times \overline{V})$.
\item The linear momentum $\rho_R\vu_R\in C_{\rm weak}([0,T], L^{2}(V))$ and the momentum equation \eqref{eq:mmom}
is satisfied in the weak sense
\begin{multline}\label{eq:weakmmom}
\int_{\Omega_R}\rho_R\vu_R(\tau,\cdot)\cdot\varphi(\tau,\cdot){\rm d} x - \int_{\Omega_R} \mathbf{q}_0(\cdot)\cdot\varphi(0,\cdot){\rm d} x  \\
 =\int_0^\tau \int_{\Omega_R}\Big(
\rho_R \vu_R \cdot \partial_t \varphi + \rho_R \vu_R
\otimes \vu_R : \nabla \varphi +
p(\rho_R)\operatorname{div}\varphi - \mathbb {S}(\nabla \vu_R) : \nabla \varphi  \Big)\ dx \ dt,
\end{multline}
for all $\tau\in [0,T]$ and
 for any test
function $\varphi \in C^1_c([0,T] \times \Omega_R)$. Moreover, 
\begin{equation}\label{umequinf}
\vu_R=\vu_{\infty} \mbox{ a.e. in }(0,T)\times [(V\setminus B_R)\cap \mc{S}].
\end{equation}
\item The following energy inequality holds: for a.e. $\tau\in (0,T)$,
\begin{multline}\label{m:energy}
\int_{\Omega_R}\Big(\frac{1}{2}\rho_R|\vu_R-\mathbf {u}_{\infty}|^2 +
E(\rho_R|\rho_{\infty})\Big)(\tau)\ dx
+ \int_0^\tau\int_{\Omega_R} \mathbb{S}(\nabla (\vu_R-\vu_{\infty})):\nabla (\vu_R-\vu_{\infty})\ dx\ dt\\
\leq
  \int_{\Omega_R}\Big(\frac{1}{2}\frac{|\mathbf{q}_0-\rho_{0}\mathbf{u}_{\infty}|^2}{\rho_0} +
E(\rho_0|\rho_{\infty})\Big)\ dx + \int_0^\tau
\int_{B_1\setminus\mc{S}} \rho\vu\cdot\nabla\mathbf {u}_{\infty}
\cdot(\mathbf{u}_{\infty}- \vu)\,{ d}x\,{ d}t - \int _0^{ \tau}\int _{B_1\setminus\mc{S}}\mathbb{S}(\nabla\vu_{\infty}):\nabla(\vu-\vu_\infty)\,{ d} x\, { d}t.
\end{multline}
\end{itemize}
\end{Definition}
 We want to show the existence of the solution $(\rho_R,\vu_R)$ according to \cref{def:mbddenergy} to the system \eqref{eq:mmass}--\eqref{mini}  as a limit of the solution $(\rho_m,\vu_m)$ to the system   \eqref{eq:penmass}--\eqref{penini} as $m\rightarrow \infty$.
\begin{Theorem}\label{thm:m}
Assume that
$0<\rho_{\infty}<\overline{\rho}$, $\mathbf{u}_{\infty}\in C_c^1(\mathbb{R}^d)$ is defined by \eqref{d2}. Assume that the pressure satisfies the hypothesis \eqref{p-law}, the initial data satisfy
\begin{equation}\label{ini:R}
 E(\rho_0|\rho_{\infty}) \in L^1(\Omega_R),\quad 0\leq \rho_{0} < \overline{\rho},\quad
\frac{|\mathbf{q}_0-\rho_{0}\mathbf{u}_{\infty}|^2}{\rho_0}\mathds{1}_{\{\rho_0 > 0\}}\in L^1(\Omega_R).
\end{equation}
Then  the problem \eqref{eq:mmass}--\eqref{mini}  admits at least one renormalized bounded energy weak solution $(\rho_R,\vu_R)$ according to \cref{def:mbddenergy}.
\end{Theorem}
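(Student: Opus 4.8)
The plan is to obtain $(\rho_R,\vu_R)$ as the limit $m\to\infty$ of the penalized solutions $(\rho_m,\vu_m)$ constructed in \cref{thm:pen}. First I would extract uniform-in-$m$ bounds from the energy inequality \eqref{pen:energy}. The right-hand side of \eqref{pen:energy} is $m$-independent once we control $\int_0^\tau\int_V\rho_m\vu_m\cdot\nabla\vu_\infty\cdot(\vu_\infty-\vu_m)$ and $\int_0^\tau\int_V\mathbb S(\nabla\vu_\infty):\nabla(\vu_m-\vu_\infty)$; the first is handled by Young's inequality absorbing $\|\sqrt{\rho_m}(\vu_m-\vu_\infty)\|_{L^2}^2$ into the left side (using $\rho_m<\overline\rho$ and $\nabla\vu_\infty$ supported in $B_1\setminus\mc S$ where $H(\rho|\rho_\infty)$-type coercivity is available via Gronwall), the second by Cauchy–Schwarz against the dissipation. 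This yields, uniformly in $m$: $\vu_m-\vu_\infty$ bounded in $L^2(0,T;W_0^{1,2}(V))$, $E(\rho_m|\rho_\infty)$ and $\rho_m|\vu_m-\vu_\infty|^2$ bounded in $L^\infty(0,T;L^1(V))$, and crucially $\sqrt m\,\mathds 1_{\{(V\setminus B_R)\cup\mc S\}}(\vu_m-\vu_\infty)$ bounded in $L^2((0,T)\times V)$, which forces $\vu_m\to\vu_\infty$ in $L^2$ on $(V\setminus B_R)\cup\mc S$ and gives \eqref{umequinf} in the limit. The density bound $0\le\rho_m<\overline\rho$ is preserved by the $\beta>5/2$ growth of $p$ exactly as in the bounded-domain hard-sphere theory of Feireisl–Zhang \cite{MR2646821}.

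Next I would pass to the limit in the equations. From the bounds, up to a subsequence, $\vu_m-\vu_\infty\rightharpoonup\vu_R-\vu_\infty$ weakly in $L^2(0,T;W_0^{1,2}(V))$, $\rho_m\overset{*}{\rightharpoonup}\rho_R$ weakly-$*$ in $L^\infty$, and by the continuity equation plus Aubin–Lions-type arguments $\rho_m\to\rho_R$ in $C_{\rm weak}([0,T];L^q(V))$ and strongly in, say, $L^1$; then $\rho_m\vu_m\to\rho_R\vu_R$ in the sense of distributions and $\rho_m\vu_m\otimes\vu_m\rightharpoonup\rho_R\vu_R\otimes\vu_R$ by the standard compactness for the compressible system (the convective term is handled because $\vu_m$ converges strongly in $L^2((0,T)\times V)$ on compacts by Aubin–Lions applied to $\rho_m\vu_m$). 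Testing \eqref{eq:weakpenmom} with $\varphi\in C^1_c([0,T]\times\Omega_R)$ kills the penalization term $m\mathds 1_{\{(V\setminus B_R)\cup\mc S\}}(\vu_m-\vu_\infty)\cdot\varphi$ since $\varphi$ vanishes on $\mc S$ and outside $B_R$, and we recover \eqref{eq:weakmmom}. The energy inequality \eqref{m:energy} follows by weak lower semicontinuity, noting the penalization integral in \eqref{pen:energy} is nonnegative and dropped, and that $\nabla\vu_\infty$ is supported in $B_1\setminus\mc S$ so the boundary-layer terms localize correctly.

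The main obstacle is the identification of the pressure limit: showing $\overline{p(\rho_m)}=p(\rho_R)$, i.e. that $p(\rho_m)\to p(\rho_R)$ strongly enough in $L^1((0,T)\times\Omega_R)$ (equivalently $\rho_m\to\rho_R$ a.e.). This requires the effective viscous flux identity together with a renormalized continuity equation and a control of the defect measure. In the hard-sphere setting one cannot use the Feireisl oscillation-defect bound in its classical form; instead, following \cite{MR2646821,MR3912678}, one exploits that $p$ is monotone and that the amplitude of oscillations is trapped below $\overline\rho$, combined with equi-integrability of $\{p(\rho_m)\}$ (which must be re-derived here, uniformly in $m$, as in \cref{thm:pen}) to upgrade weak to strong convergence of the density. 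Once $\rho_m\to\rho_R$ a.e. is established, the DiPerna–Lions regularization \cite{DiPerna1989} gives the renormalized equation \eqref{eq:penrenorm} in the limit, and all remaining terms pass to the limit by continuity, completing the proof. A secondary technical point is that the penalization region $(V\setminus B_R)\cup\mc S$ is where we only have $\vu_m\approx\vu_\infty$ with $\vu_\infty$ merely $C^1$, so care is needed that the limit velocity indeed satisfies the no-slip condition $\vu_R=0$ on $\partial\mc S$ (inherited from $\vu_\infty=0$ on $B_{1/2}\supset\mc S$) and the exterior matching \eqref{umequinf}; both follow from the strong $L^2$-convergence forced by the $\sqrt m$-bound.
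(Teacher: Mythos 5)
Your proposal follows essentially the same route as the paper: uniform energy bounds, the $\sqrt m$-bound forcing $\vu_m\to\vu_\infty$ on the penalization set and hence \eqref{umequinf}, Bogovskii-type test functions for (equi-)integrability of $p(\rho_m)$, effective viscous flux plus the renormalized continuity equation with $b(\rho)=\rho\log\rho$ and monotonicity of $p$ to upgrade weak to a.e.\ convergence of the density, and lower semicontinuity for the energy inequality. One step is misstated, though: Aubin--Lions applied to $\rho_m\vu_m$ does \emph{not} yield strong $L^2$-convergence of $\vu_m$ itself (the density may vanish); what it gives is $\rho_m\vu_m\to\rho_R\vu_R$ strongly in $L^2(0,T;W^{-1,2}(V))$, which the paper then pairs with the weak convergence $\vu_m\rightharpoonup\vu_R$ in $L^2(0,T;W^{1,2}(V))$ via the local weak compactness lemma \cite[Lemma 6.6]{MR2084891} to pass to the limit in $\rho_m\vu_m\otimes\vu_m$. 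The intended conclusion is correct, but you should replace the claim of strong convergence of $\vu_m$ by this weak--strong duality pairing.
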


\begin{proof}
We denote by $c=c(R)$, a generic constant that may depend on $R$
but is independent of $m$. As $(\rho_m,\vu_m)$ satisfies energy inequality \eqref{pen:energy}, we can deduce the following estimates: \begin{equation}\label{E1}
\displaystyle 
\sup_{t\in (0,T)}\int _V \rho _m|\mathbf u_m-\mathbf u_\infty|^2
\mbox{d}x \leq c(R),
\end{equation}
\begin{equation}\label{E2}
\sup_{t\in (0,T)}\int _V E(\rho _m|\rho_{\infty})
\mbox{d}x \leq c(R), \end{equation}
\begin{equation}\label{E3}
\|\vu_m -\vu_{\infty}\|_{L^2((0,T)\times((V\setminus B_R)\cup\mathcal S ))}
\le \frac{c(R)}{\sqrt{m}},
\end{equation}
\begin{equation}\label{E4}
\|\vu_m \|_{L^2(0,T;W^{1,2}(V))}\le c(R),
\end{equation}
 where the estimates \eqref{E1}--\eqref{E3} are direct consequence of \eqref{pen:energy} and to derive \eqref{E4}, we need to use Korn-Poincar\'{e} type inequality along with \eqref{pen:energy}. Moreover, by virtue of \eqref{E1}, \eqref{E4} and boundedness of $\rho_m$ in $(0,T)\times V$, we have
 \begin{equation}\label{bd:rhou}
 \|\rho_m \vu_m\|_{L^{\infty}(0,T;L^{2}(V))} + \|\rho_m \vu_m\|_{L^2(0,T; L^{\frac{6q}{6+q}}(V))} \leq c(R),\mbox{  for any  }1\leq q <\infty.
 \end{equation}
\begin{equation}\label{bd:rhouu}
 \|\rho_m |\vu_m|^2\|_{L^2(0,T; L^{\frac{3}{2}}(V))} \leq c(R).
 \end{equation}

 \underline{Step 1: Limit in the continuity equation and boundedness of the density.}

 It follows from \cite[Section 3]{MR2646821} that
 \begin{equation}\label{rhoCw}
 \rho_m \mbox{ is bounded in }(0,T)\times V,\quad 
 \rho_m\in C_{\mbox{weak}}([0,T]; L^q(V))\mbox{  for any  }1\leq q <\infty,
 \end{equation}
 and
 \begin{equation}\label{rhouCw}
 \rho_m \vu_m\in C_{\mbox{weak}}([0,T]; L^2(V)).
  \end{equation}
 Using the renormalized continuity equation \eqref{eq:penrenorm}, we conclude that 
  \begin{equation}\label{rhoC}
 \rho_m\in C([0,T]; L^q(V))\mbox{  for any  }1\leq q <\infty.
 \end{equation}
 Consequently, we infer from relations \eqref{rhoC} and \eqref{E4} that
 \begin{align}
 \rho_m\rightarrow \rho_R &\mbox{  weakly--}* \mbox{ in }L^{\infty}((0,T); L^q(V)),\label{rhom}\\
 \vu_m \rightarrow \vu_R &\mbox{  weakly in   }L^2(0,T;W^{1,2}(V))\label{um}.
 \end{align}
 We obtain from the continuity equation \eqref{eq:weakpenmass} and the estimate \eqref{bd:rhou} that $\{\rho_m\}$ is uniformly continuous in $W^{-1,2}(V)$ on $[0,T]$. Since, it is also uniformly bounded in $L^q(V)$, we can apply Arzela-Ascoli \cite[Lemma 6.2, page 301]{MR2084891} to conclude 
  \begin{equation}\label{m:rhoCw}
 \rho_m \rightarrow \rho_R \mbox{ in } C_{\mbox{weak}}([0,T]; L^q(V))\mbox{  for any  }1\leq q <\infty.
 \end{equation}
 Furthermore, due to the compact imbedding  $L^2(V)\hookrightarrow \hookrightarrow W^{-1,2}(V)$, we obtain 
  \begin{equation}\label{m:rhos}
 \rho_m \rightarrow \rho_R \mbox{ strongly in } L^2(0,T; W^{-1,2}(V)).
 \end{equation}
 Moreover, the bound 
\eqref{bd:rhou} and the convergences \eqref{rhom}--\eqref{m:rhos} imply
\begin{equation}\label{con:rhou}
 \rho_m \vu_m \rightarrow \rho_R\vu_R \mbox{ weakly in }L^2(0,T; L^{\frac{6q}{6+q}}(V))  \mbox{  and   weakly}-* \mbox{ in }L^{\infty}(0,T;L^{2}(V)).
  \end{equation}
 Consequently, the convergences \eqref{rhom}--\eqref{con:rhou} enable us to pass the limit $m\rightarrow \infty$ in the equation \eqref{eq:weakpenmass} and we obtain:
 \begin{equation*}
 \int_V \rho_R(\tau,\cdot)\varphi(\tau,\cdot)\ {\rm
d}x - \int_V \rho_0(\cdot)\varphi(0,\cdot)\ {\rm
d}x=\int_0^\tau \int_{V}\left(\rho_R
\partial_t \varphi + \rho_R\vu_R \cdot \nabla \varphi\right)
 \ dx\ dt,
\end{equation*}
for all $\tau\in [0,T]$ and any test function $\varphi
\in C^1_{c}([0,T] \times \overline{V})$. 

Now we need to prove that $\rho_R$ is bounded. We already know from \cref{thm:pen} that
\begin{equation}\label{bdd:rhom}
0\leq \rho_m < \overline{\rho}\quad\mbox{ in }\quad (0,T)\times V.
\end{equation}
Moreover, we know from \cite[Section 4.2]{MR3912678} that
\begin{equation}\label{rhom:ant}
\int _V |\overline{\rho}-\rho_m|^{-\beta+1}  \leq c(R)\mbox{  for all  }t\in [0,T]. \end{equation}

We can use \eqref{m:rhoCw}--\eqref{m:rhos} to conclude 
\begin{equation}\label{bd:rhoR1}
0\leq \rho_R \leq \overline{\rho}.
\end{equation}
Using \eqref{E2} and taking limit $m\rightarrow\infty$, we have
\begin{equation}\label{ER}
\sup_{t\in (0,T)}\int _V E(\rho _R|\rho_{\infty})
\mbox{d}x \leq c(R). \end{equation}
Observe that 
\begin{equation*}
\lim_{\rho_R\rightarrow \overline{\rho}} \frac{E(\rho_R|\rho_{\infty})}{H(\rho_R)} = 1.
\end{equation*}
Thus, there exists $\delta>0$ such that 
\begin{equation}\label{rel:HR}
\frac{1}{2}H(\rho_R)\leq E(\rho_R|\rho_{\infty})\leq \frac{3}{2}H(\rho_R),\quad\forall\ \rho_R\in [\overline{\rho}-\delta,\overline{\rho}].
\end{equation}
Moreover, the behaviour of pressure \eqref{p-law} and definition of the function $H$ in \eqref{def:H} imply that
 \begin{equation}\label{HR:beh}
 H(\rho_R)\sim _{\rho\rightarrow \overline{\rho}-} |\overline{\rho}-\rho_R|^{-\beta+1}\mbox{ for some }\beta > 5/2.
 \end{equation}
The relations \eqref{ER}, \eqref{rel:HR} and \eqref{HR:beh} help us to exclude the possibility of equality $\rho_R=\overline{\rho}$ in \eqref{bd:rhoR1} and we can conclude 
\begin{equation}\label{bd:rhoR2}
0\leq \rho_R < \overline{\rho}.
\end{equation}

 \underline{Step 2: Uniform integrability of the pressure.}
 In order to pass the limit $m\rightarrow\infty$ in the weak formulation of the momentum equation \eqref{eq:weakpenmom}, we need the uniform bound of the pressure with respect to the parameter $m$. This is our aim to achieve in this step. We choose cut-off functions $\eta\in C_c^{\infty}(0,T)$ with $0\leq \eta \leq 1$ and $\psi\in C_c^1(B_R\setminus \mc{S})$ with $0\leq \psi \leq 1$. We consider the following test functions 
 \begin{equation}\label{testm}
 \varphi=\eta(t)\mc{B}\left(\psi\rho_m -\psi \alpha_m\right),\quad\mbox{ where }\quad\alpha_m=\frac{\int_{B_R\setminus \mc{S}} \psi\rho_m}{\int_{B_R\setminus \mc{S}} \psi}.
 \end{equation}
 in the weak formulation of momentum equation \eqref{eq:weakpenmom}, where $\mathcal{B}$ is the Bogovskii operator which assigns to each $g\in L^p(B_R\setminus \mc{S})$, $\int\limits_{B_R\setminus \mc{S}} g\ dx=0$, a solution to the problem
 \begin{equation*}
 \operatorname{div} \mc{B}[g] = g \mbox{ in }B_R\setminus \mc{S},\quad \mc{B}[g] =0 \mbox{  on  }\partial B_R \cup \partial\mc{S}.
 \end{equation*}
 Here $\mc{B}$ is a bounded linear operator from $L^p(B_R\setminus \mc{S})$ to $W^{1,p}_0(B_R\setminus \mc{S})$, for any $1< p < \infty$ and it can be extended as a bounded linear operator on $[W^{1,p}(B_R\setminus \mc{S})]'$ with values in $L^{p'}(B_R\setminus \mc{S})$ for any $1< p < \infty$.

 We test the momentum equation \eqref{eq:weakpenmom} with $\varphi$ defined in \eqref{testm} to obtain the following identity:
 \begin{equation*}
 \int\limits_0^T\int\limits_{V} \eta p(\rho_m) (\psi\rho_m - \psi\alpha_m)\ dx\ dt = \sum\limits_{i=1}^{5} I_i,
 \end{equation*}
 where 
\begin{equation*}
I_1= -\int\limits_0^T \partial_t\eta \int\limits_{V} \rho_m\vu_m\cdot \mc{B}(\psi\rho_m - \psi\alpha_m)\ dx\ dt,
\end{equation*}
 \begin{equation*}
I_2= \int\limits_0^T \eta \int\limits_{V} \rho_m\vu_m\cdot \mc{B}(\operatorname{div}(\rho_m\vu_m\psi))\ dx\ dt,
\end{equation*}
  \begin{equation*}
I_3= -\int\limits_0^T \eta \int\limits_{V} \rho_m\vu_m\cdot  \mc{B}\left(\rho_m\vu_m\cdot \nabla \psi - \frac{\psi}{\int_V \psi \ dx}\int_V \rho_m\vu_m \cdot \nabla\psi \ dx\right)\ dx\ dt,
\end{equation*}
  \begin{equation*}
I_4= -\int\limits_0^T \eta \int\limits_{V} \rho_m\vu_m \otimes \vu_m : \nabla \mc{B}(\psi\rho_m - \psi\alpha_m)\ dx\ dt,
\end{equation*}
   \begin{equation*}
I_5= \int\limits_0^T \eta \int\limits_{V} \mathbb{S}(\nabla\vu_m) : \nabla \mc{B}(\psi\rho_m - \psi\alpha_m)\ dx\ dt.
\end{equation*}
The uniform bounds obtained in \eqref{E1}, \eqref{E2}, and \eqref{E4}--\eqref{bd:rhou} along with boundedness of the operator $\mc{B}$ from $L^p(B_R\setminus \mc{S})$ to $W^{1,p}_0(B_R\setminus \mc{S})$ (see \cite[Chapter 3]{MR1284205}), we have that the integrals $I_i$, $i=1,\ldots , 5$ are uniformly bounded with respect to $m$. Consequently, we have 
 \begin{equation}\label{12:43}
 \left|\int\limits_0^T\int\limits_{V} \eta p(\rho_m) (\psi\rho_m - \psi\alpha_m)\ dx\ dt \right| \leq c ,
 \end{equation}
where $c$ is independent of parameter $m$. We also know from \eqref{ini:m} that 
\begin{equation*}
\frac{1}{|V|}\int\limits_V \rho_m \ dx = \frac{1}{|V|}\int\limits_V \rho_0 \ dx = \mc{M}_{0,V} < \overline{\rho}.
\end{equation*}
We can write 
\begin{equation}\label{decom:J1J2}
\int\limits_0^T\int\limits_{V} \eta p(\rho_m) (\psi\rho_m - \psi\alpha_m)\ dx\ dt = J_1 + J_2,
\end{equation}
where 
\begin{equation*}
J_1= \int\limits_0^T\int\limits_{\{\rho_m< (\mc{M}_{0,V}+\overline{\rho})/2\}} \eta p(\rho_m) (\psi\rho_m - \psi\alpha_m)\ dx\ dt,
\end{equation*}
\begin{equation*}
J_2= \int\limits_0^T\int\limits_{\{\rho_m\geq {\mc{M}_{0,V}+\overline{\rho}}/{2}\}} \eta p(\rho_m) (\psi\rho_m - \psi\alpha_m)\ dx\ dt.
\end{equation*}
Since $\eta$, $\psi$ are bounded, the integral $J_1$ is also bounded and we can bound $J_2$ in the following way:
\begin{equation*}
J_2 \geq \frac{\overline{\rho}-\mc{M}_{0,V}}{2}\int\limits_0^T \int\limits_{\{\rho_m\geq (\mc{M}_{0,V}+\overline{\rho})/2\}}\eta\psi p(\rho_m)\ dx\ dt. 
\end{equation*}
Thus, we conclude from estimate \eqref{12:43}, decomposition \eqref{decom:J1J2} and bounds of $J_1$, $J_2$ that
for any compact set $K\subset B_R\setminus \mc{S}$:
\begin{equation}\label{bdd:pm}
\|p(\rho_m)\|_{L^1(0,T;L^1(K))} \leq c(K).
\end{equation}
Since the pressure satisfies the hypothesis \eqref{p-law}, in particular, for  $\delta>0$, we also have \begin{equation}\label{rhom:beta}
\int\limits_0^T\int\limits_{K \cap \{0\leq \rho\leq \overline{\rho}-\delta\}} |\overline{\rho}-\rho_m|^{-\beta} \leq c(K).
\end{equation}
 \underline{Step 3: Equi-integrability of the pressure.} The $L^1(0,T;L^1(K))$ boundedness of the pressure $\{p(\rho_m)\}$ obtained in \eqref{bdd:pm} is not sufficient to pass the limit in the pressure term. We know from Dunford-Pettis Theorem that we need to establish equi-integrability of the pressure family $\{p(\rho_m)\}$ to obtain the weak convergence of the pressure. As in the previous section, we fix the cut-off functions $\eta\in C_c^{\infty}(0,T)$ with $0\leq \eta \leq 1$ and $\psi\in C_c^1(B_R\setminus \mc{S})$ with $0\leq \psi \leq 1$. We consider the following test functions 
 \begin{equation}\label{test1m}
 \varphi_m=\eta(t)\mc{B}\left(\psi b(\rho_m) - \alpha_m\right),\quad\mbox{ where }\quad\alpha_m=\frac{\int_{B_R\setminus \mc{S}} \psi b(\rho_m)}{|{B_R\setminus \mc{S}}|}
 \end{equation}
 with 
 \begin{align}\label{def:b}
 b(\rho)=
 \begin{cases}
 \log (\overline{\rho}-\rho)&\mbox{ if }\rho \in [0, \overline{\rho}-\delta),\\
 \log \delta &\mbox{ if }\rho \geq \overline{\rho}-\delta,
 \end{cases}
 \end{align}
 for some $\delta >0$. Observe that 
 \begin{equation*}
 b'(\rho)=\frac{1}{\overline{\rho}-\rho}\mathds{1}_{[0, \overline{\rho}-\delta)}(\rho).
 \end{equation*}
 We can use estimate \eqref{rhom:ant}, the boundedness \eqref{bdd:rhom} of $\rho_m$, estimate  \eqref{rhom:beta} and the expressions of $b(\rho)$, $b'(\rho)$ to obtain: for any $1\leq p< \infty$ and any compact set $K\subset B_R\setminus \mc{S}$,
 \begin{align}
 \|b(\rho_m)\|_{L^{\infty}(0,T;L^p(B_R\setminus \mc{S}))} &\leq c(p)\label{b1},\\
 \|\rho_m b'(\rho_m)-b(\rho_m)\|_{L^{\beta}((0,T)\times K)} &\leq c(K)\label{b2},\\
 \|\rho_m b'(\rho_m)-b(\rho_m)\|_{L^{\infty}(0,T; L^{\beta-1}(B_R\setminus \mc{S}))} &\leq c\label{b3}.
 \end{align}
 We consider the momentum equation \eqref{eq:weakpenmom}  with $\varphi_m$ (defined in \eqref{test1m}) as a test function and use renormalized equation \eqref{eq:penrenorm} to obtain the following identity:
 \begin{equation*}
 \int\limits_0^T \eta\int\limits_{B_R\setminus \mc{S}}\psi p(\rho_m) b(\rho_m)\ dx\ dt = \sum\limits_{i=1}^{7} I_i,
 \end{equation*}
 where 
 \begin{equation*}
I_1= \frac{1}{|B_R\setminus \mc{S}|}\int\limits_0^T \eta (t) \int\limits_{B_R\setminus \mc{S}} \psi b(\rho_m)\ dx \int\limits_V p( \rho_m)\ dx\ dt,
\end{equation*}
\begin{equation*}
I_2= \int\limits_0^T \partial_t\eta \int\limits_{V} \rho_m\vu_m\cdot \mc{B}(\psi b(\rho_m) - \alpha_m)\ dx\ dt,
\end{equation*}
 \begin{equation*}
I_3= \int\limits_0^T \eta \int\limits_{V} \rho_m\vu_m\cdot \mc{B}(\operatorname{div}(\rho_m\vu_m\psi))\ dx\ dt,
\end{equation*}
  \begin{equation*}
I_4= -\int\limits_0^T \eta \int\limits_{V} \rho_m\vu_m\cdot  \mc{B}\left(b(\rho_m)\vu_m\cdot \nabla \psi - \frac{1}{|B_R \setminus \mc{S}|}\int_V b(\rho_m)\vu_m \cdot \nabla\psi \ dx\right)\ dx\ dt,
\end{equation*}
\begin{equation*}
I_5= -\int\limits_0^T \eta \int\limits_{V} \rho_m\vu_m\cdot  \mc{B}\left[\psi(\rho_mb'(\rho_m)-b(\rho_m))\operatorname{div}\vu_m -\frac{1}{|B_R \setminus \mc{S}|}\int\limits_{B_R\setminus \mc{S}} \psi(\rho_mb'(\rho_m)-b(\rho_m))\operatorname{div}\vu_m  \ dx\right]\ dx\ dt,
\end{equation*}
  \begin{equation*}
I_6= -\int\limits_0^T \eta \int\limits_{V} \rho_m\vu_m \otimes \vu_m : \nabla \mc{B}(\psi b(\rho_m) - \alpha_m)\ dx\ dt,
\end{equation*}
   \begin{equation*}
I_7= \int\limits_0^T \eta \int\limits_{V} \mathbb{S}(\nabla\vu_m) : \nabla \mc{B}(\psi\rho_m - \alpha_m)\ dx\ dt.
\end{equation*}
 We follow \cite[Section 4.4]{MR3912678}, \cite[Section 3.5]{MR2646821} and use the bounds \eqref{E1}, \eqref{E2}, \eqref{E4}--\eqref{bd:rhou}, \eqref{b1}--\eqref{b3} along with the boundedness of the operator $\mc{B}$ from $[W^{1,q}(B_R\setminus \mc{S})]^{*}$ to $L^{q'}(B_R\setminus \mc{S})$ (see \cite{MR2240056}) to conclude that for any compact set $K\subset B_R\setminus \mc{S}$:
\begin{equation}\label{eqint:pm}
\|p(\rho_m)b(\rho_m)\|_{L^1(0,T;L^1(K))} \leq c(K).
\end{equation}
The estimate \eqref{eqint:pm}
implies that the sequence $p(\rho_m)$ is equi-integrable in $L^1((0,T)\times K)$ and we also have
\begin{equation}\label{prholim}
p(\rho_m) \rightarrow \overline{p(\rho_R)} \quad \mbox{weakly in }L^1((0,T)\times K)
\end{equation}
for any compact set $K\subset B_R\setminus \mc{S}$ up to a subsequence.

 \underline{Step 4: Limit in the momentum equation.}
 We already have from \eqref{con:rhou} that \begin{equation*}
 \rho_m \vu_m \rightarrow \rho_R\vu_R \mbox{ weakly in }L^2(0,T; L^{\frac{6q}{6+q}}(V))  \mbox{  and   weakly}-* \mbox{ in }L^{\infty}(0,T;L^{2}(V)).
  \end{equation*}
  We use the bounds \eqref{bd:rhou}--\eqref{bd:rhouu} and \eqref{prholim} in the momentum equation \eqref{eq:weakpenmom} to obtain the equicontinuity of the sequence $t\mapsto \int\limits_{V} \rho_m\vu_m \phi$, $\phi\in C_c^1(V)$ in $C[0,T]$. Moreover, $\rho_m\vu_m$ is uniformly bounded in $L^2(V)$. We apply Arzela-Ascoli theorem \cite[Lemma 6.2, page 301]{MR2084891} to have
  \begin{equation}\label{m:rhouCw} 
 \rho_m\vu_m\rightarrow \rho_R\vu_R\quad \mbox{ in  }\quad C_{\mbox{weak}}([0,T]; L^2(V)).
 \end{equation}
 Moreover, the compact embedding of $L^2(V)\hookrightarrow\hookrightarrow W^{-1,2}(V)$ ensures that we can apply \cite[Lemma 6.4, page 302]{MR2084891} to have 
 \begin{equation}\label{m:rhoustrl} 
 \rho_m\vu_m\rightarrow \rho_R\vu_R\quad \mbox{ strongly in  }L^{2}(0,T; W^{-1,2}(V)).
 \end{equation}
  The estimates \eqref{bd:rhouu}, \eqref{um} and weak compactness result \cite[Lemma 6.6, page 304]{MR2084891} yield
 \begin{equation}\label{con:rhouum}
 \rho_m \vu_m \otimes \vu_m \rightarrow \rho_R\vu_R \otimes \vu_R \mbox{ weakly in }L^2(0,T; L^{\frac{3}{2}}(V)).
  \end{equation}
  Let us recall the notation: $$\Omega_R:= B_R \setminus \overline{\mc{S}}.$$
  We take limit $m \rightarrow \infty$ in 
the momentum equation \eqref{eq:weakpenmom} and use \eqref{rhom}, \eqref{con:rhou}, \eqref{m:rhouCw}--\eqref{con:rhouum} to conclude that: for all $\tau\in [0,T]$ and
 for any test
function $\varphi \in C^1_c([0,T] \times \Omega_R)$,
\begin{multline}\label{limit-m}
\int_{\Omega_R}\rho_R\vu_R(\tau,\cdot)\cdot\varphi(\tau,\cdot){\rm d} x - \int_{\Omega_R} \mathbf{q}_0(\cdot)\cdot\varphi(0,\cdot){\rm d} x  \\
 =\int_0^\tau \int_{\Omega_R}\Big(
\rho_R \vu_R \cdot \partial_t \varphi + \rho_R \vu_R
\otimes \vu_R : \nabla \varphi +
\overline{p(\rho_R)}\operatorname{div}\varphi - \mathbb {S}(\nabla \vu_R) : \nabla \varphi  \Big)\ dx \ dt.
\end{multline}
 Moreover, we know from the estimate \eqref{E3} that
\begin{equation}\label{uRuinf}
\vu_R=\vu_{\infty} \mbox{ a.e. in }(0,T)\times [(V\setminus B_R)\cap \mc{S}].
\end{equation}
  
 We want to show that 
 \begin{equation*}
 \overline{p(\rho_R)}=p(\rho_R).
\end{equation*}
 It is enough to establish that the family of densities $\{\rho_m\}$ converges almost everywhere in $B_R \setminus \mc{S}$. In order to show this, we need to use the idea of \textit{effective viscous flux} which is developed in \cite{MR1637634, MR1867887} in the context of isentropic compressible fluid (see also \cite[Proposition 7.36, Page 338]{MR2084891}). Let us denote by $\nabla \Delta^{-1}$ the pseudodifferential operator of the Fourier symbol $\frac{i\xi}{|\xi|^2}$. We use the test function
 \begin{equation*}
 \varphi(t,x)=\eta (t)\psi (x) \nabla \Delta^{-1} [\rho_m \psi],\quad \eta\in C_c^{\infty}(0,T),\ 0\leq \eta \leq 1,\ \psi\in C_c^1(B_R\setminus \mc{S}),\ 0\leq \psi \leq 1
 \end{equation*}
 in the $m$-th level approximating momentum equation \eqref{eq:weakpenmom} and the test function 
 \begin{equation*}
 \varphi(t,x)=\eta (t)\psi (x) \nabla \Delta^{-1} [\rho_R \psi],\quad \eta\in C_c^{\infty}(0,T),\ 0\leq \eta \leq 1,\ \psi\in C_c^1(B_R\setminus \mc{S}),\ 0\leq \psi \leq 1
 \end{equation*}
 in the limiting momentum equation \eqref{limit-m}. We subtract these two resulting identities and taking the limit $m\rightarrow \infty$. These limiting procedure is the main step in the barotropic Navier-Stokes case (see \cite{MR1637634}, \cite[Lemma 3.2]{MR1867887}). This procedure has been adapted in \cite[Section 3.6]{MR2646821}, \cite[Section 4.6]{MR3912678} in the context of hard pressure law and we follow it here to obtain:  
 \begin{equation}\label{eq:EVF}
 (2\mu+\lambda) \left(\overline{\rho_R\operatorname{div}\vu_R}-\rho_R\operatorname{div}\vu_R \right) =   \left(\overline{p(\rho_R) \rho_R}-\overline{p(\rho_R)}\rho_R\right),
 \end{equation}
 where the quantity $\left(p(\rho)-(2\mu+\lambda)\operatorname{div}\vu\right)$ is termed as \textit{effective viscous flux}. In the above relation and in the sequel the overlined quantities are used to denote the $L^1$-weak limits of the corresponding sequences.
We already established in Step 1 that the bounded density $\rho_R\in C_{\mbox{weak}}([0,T]; L^1(V))$ satisfies the continuity equation
\eqref{eq:weakmmass}.  We can use the regularization procedure in the transport theory by DiPerna and Lions \cite{DiPerna1989} to show that $(\rho_R,\vu_R)$ also satisfy a renormalized continuity equation in a weak sense, i.e,
 \begin{equation}\label{eq:renormR}
\partial_t b(\rho_R) + \operatorname{div}(b(\rho_R)\vu_R) + (b'(\rho_R)-b(\rho_R))\operatorname{div}\vu_R=0 \mbox{ in }\, \mc{D}'([0,T)\times \overline{V}) ,
\end{equation} 
for any $b\in C([0,\infty)) \cap C^1((0,\infty))$. Now use $b(\rho_R)=\rho_R\log\rho_R$ in \eqref{eq:renormR} and subtract the resulting equation to the equation \eqref{eq:penrenorm} with $b(\rho_m)=\rho_m\log\rho_m$ and letting $m\rightarrow \infty$ to get
\begin{equation}\label{renorm:diff}
\int\limits_{V} \left(\overline{\rho_R \log \rho_R}-\rho_R \log \rho_R\right)(\tau,\cdot)\ dx = \int\limits_0^{\tau}\int\limits_{V} \left(\rho_R\operatorname{div}\vu_R - \overline{\rho_R\operatorname{div}\vu_R}\right)\ dx\ dt .
 \end{equation}
 Now we use the relation \eqref{eq:EVF} in the inequality \eqref{renorm:diff} to obtain 
 \begin{equation}\label{13:44}
\int\limits_{V} \left(\overline{\rho_R \log \rho_R}-\rho_R \log \rho_R\right)(\tau,\cdot)\ dx = \frac{1}{2\mu+\lambda} \int\limits_0^{\tau}\int\limits_{V} \left(\overline{p(\rho_R)}\rho_R-\overline{p(\rho_R) \rho_R}\right)\ dx\ dt .
 \end{equation}
 Since the function $\rho \mapsto \rho\log\rho$ is a convex lower semi-continuous function on $(0,T)$ with 
 \begin{equation*}
 \rho_m \rightarrow \rho_R\quad\mbox{ weakly in }\quad L^1(V),\quad
 \rho_m\log\rho_m \rightarrow \overline{\rho_R\log\rho_R}\quad\mbox{ weakly in }\quad L^1(V),
 \end{equation*}
 then we can apply the result \cite[Corollary 3.33, Page 184]{MR2084891} from convex analysis to conclude
 \begin{equation}\label{c1}
 \rho_R\log\rho_R \leq \overline{\rho_R\log\rho_R}\quad\mbox{ a.e. in }V.
 \end{equation}
 Moreover, we know from the assumption \eqref{p-law} on pressure that $p(\rho)$ is strictly increasing on $(0,\infty)$, that is $p'(\rho)>0\ \forall\ \rho>0$. We can use the result  \cite[Lemma 3.35, Page 186]{MR2084891} on weak convergence and monotonicity to get: 
  \begin{equation}\label{c2}
 \overline{p(\rho_R)\rho_R} \geq \overline{p(\rho_R)}\rho_R\quad\mbox{ a.e. in }V.
 \end{equation}
 Thus, the relations \eqref{c1}--\eqref{c2} and the inequality \eqref{13:44} yield
  \begin{equation*}
 \rho_R\log\rho_R = \overline{\rho_R\log\rho_R}\quad\mbox{ a.e. in }V.
 \end{equation*}
 Since the function $\rho \mapsto \rho\log\rho$ is strictly convex on $[0,\infty)$, we obtain that 
  \begin{equation}\label{lim:rho}
 \rho_m \rightarrow {\rho_R}\quad\mbox{ a.e. in }(0,T)\times V \mbox{ and in }L^p((0,T)\times V)\mbox{ for }1\leq p< \infty.
 \end{equation}
 We use \eqref{lim:rho} together with \eqref{prholim} to conclude that for any compact set $K\subset B_R\setminus \mc{S}$:
\begin{equation*}
p(\rho_m) \rightarrow {p(\rho_R)} \quad \mbox{ a.e. in }(0,T)\times V \mbox{ and in }L^1((0,T)\times K).
\end{equation*}
In particular, we have $\overline{p(\rho_R)}=p(\rho_R)$. The substitution of this relation in \eqref{limit-m} yields the limiting momentum equation : for all $\tau\in [0,T]$ and
 for any test
function $\varphi \in C^1_c([0,T] \times \Omega_R)$,
\begin{multline*}
\int_{\Omega_R}\rho_R\vu_R(\tau,\cdot)\cdot\varphi(\tau,\cdot){\rm d} x - \int_{\Omega_R} \mathbf{q}_0(\cdot)\cdot\varphi(0,\cdot){\rm d} x  \\
 =\int_0^\tau \int_{\Omega_R}\Big(
\rho_R \vu_R \cdot \partial_t \varphi + \rho_R \vu_R
\otimes \vu_R : \nabla \varphi +
{p(\rho_R)}\operatorname{div}\varphi - \mathbb {S}(\nabla \vu_R) : \nabla \varphi  \Big)\ dx \ dt.
\end{multline*}
Hence, we have verified the momentum equation \eqref{eq:weakmmom} and it only remains to establish the energy inequality \eqref{m:energy}. 
 
 \underline{Step 5: Energy inequality.} Let us recall the energy inequality \eqref{pen:energy} for the $m$-th level penalized problem: for a.e. $\tau\in (0,T)$,
\begin{multline}\label{pen:energyag}
\int_{V}\Big(\frac{1}{2}\rho_m|\vu_m-\mathbf {u}_{\infty}|^2 +
E(\rho_m|\rho_{\infty})\Big)(\tau)\ dx
+ \int_0^\tau\int_{V}\Big( \mathbb{S}(\nabla (\vu_m-\vu_{\infty})):\nabla (\vu_m-\vu_{\infty})  +m\mathds{1}_{\{(V\setminus B_R)\cup \mc{S}\}} |\vu_m-\vu_{\infty}|^2\Big)\ dx\ dt\\
\leq
  \int_{V}\Big(\frac{1}{2}\frac{|\mathbf{q}_0-\rho_{0}\mathbf{u}_{\infty}|^2}{\rho_0} +
E(\rho_0|\rho_{\infty})\Big)\ dx + \int_0^\tau
\int_{V} \rho_m\vu_m\cdot\nabla\mathbf {u}_{\infty}
\cdot(\mathbf{u}_{\infty}- \vu_m)\,{ d}x\,{ d}t - \int _0^{ \tau}\int _V\mathbb{S}(\nabla\vu_{\infty}):\nabla(\vu_m-\vu_\infty)\,{ d} x\, { d}t.
\end{multline}
We use
\begin{itemize}
\item the properties \eqref{d2} of $\vu_{\infty}$,
\item the relation \eqref{uRuinf}, that is  $\vu_R=\vu_{\infty}$ on the set $[(V\setminus B_R)\cup \mc{S}]$, 
\item the convergences obtained for $\rho_m$, $\vu_m$, $\rho_m\vu_m$, \item the lower semi-continuity of the convex functionals at the left-hand side of \eqref{pen:energyag},
\end{itemize}
and take the limit $m \rightarrow \infty$ in \eqref{pen:energyag} to obtain
\begin{multline*}
\int_{\Omega_R}\Big(\frac{1}{2}\rho_R|\vu_R-\mathbf {u}_{\infty}|^2 +
E(\rho_R|\rho_{\infty})\Big)(\tau)\ dx
+ \int_0^\tau\int_{\Omega_R} \mathbb{S}(\nabla (\vu_R-\vu_{\infty})):\nabla (\vu_R-\vu_{\infty})\ dx\ dt\\
\leq
  \int_{\Omega_R}\Big(\frac{1}{2}\frac{|\mathbf{q}_0-\rho_{0}\mathbf{u}_{\infty}|^2}{\rho_0} +
E(\rho_0|\rho_{\infty})\Big)\ dx + \int_0^\tau
\int_{B_1\setminus\mc{S}} \rho\vu\cdot\nabla\mathbf {u}_{\infty}
\cdot(\mathbf{u}_{\infty}- \vu)\,{ d}x\,{ d}t - \int _0^{ \tau}\int _{B_1\setminus\mc{S}}\mathbb{S}(\nabla\vu_{\infty}):\nabla(\vu-\vu_\infty)\,{ d} x\, { d}t,
\end{multline*}
for a.e. $\tau\in (0,T)$. Thus we have established the existence of at least one renormalized bounded energy weak solution $(\rho_R,\vu_R)$ of the problem \eqref{eq:mmass}--\eqref{mini} according to \cref{def:mbddenergy}.
 \end{proof}
 
 \section{Proof of the main result}\label{proof}
 
In this section, we want to show the existence of the solution $(\rho,\vu)$ according to \cref{def:bddenergy} to the system \eqref{eq:mass}--\eqref{behaviour}  as a limit of the solution $(\rho_R,\vu_R)$ to the system   \eqref{eq:mmass}--\eqref{mini} as $R\rightarrow \infty$. The existence of such a sequence $\{(\rho_R,\vu_R)\}$ to problem \eqref{eq:mmass}--\eqref{mini} on $\Omega_R=B_R\setminus \overline{\mc{S}}$ is guaranteed by \cref{thm:m}. In order to prove \cref{thm:main}, we are going to follow the idea of \cite[Section 7.11]{MR2084891} concerning the method of `` invading domains'' to obtain existence in unbounded domain.

\begin{proof}[Proof of \cref{thm:main}]
Let us define
\begin{align}\label{def:Uinf}
\mathbf{U}_{\infty} := \begin{cases}
\vu_{\infty}&\mbox{ in }B_1,\\
\mathbf{a}_{\infty}&\mbox{ in }\mathbb{R}^3\setminus B_1,
\end{cases}
\end{align}
where $\vu_{\infty}$ is given by \eqref{d2}.
Now we extend $\rho_R$ by $\rho_{\infty}$ and $\vu_R$ by $\mathbf{U}_{\infty}$ outside $\Omega_{R}$ and with the abuse of notation, we still denote the new functions by $\rho_R$, $\vu_R$ respectively.

 \underline{Step 1: Some convergences.}

We denote by $C$, a generic constant that is independent of $R$ but depends on $\mathbf{U}_{\infty}$ and initial data $(\rho_0,\mathbf{q}_0)$. As $(\rho_R,\vu_R)$ satisfies energy inequality \eqref{m:energy}, we can deduce the following estimates: \begin{equation}\label{est:R1}
\displaystyle 
\sup_{t\in (0,T)}\int _{\mathbb{R}^3} \rho _R|\mathbf u_R-\mathbf{U}_{\infty}|^2
\mbox{d}x \leq C,
\end{equation}
\begin{equation}\label{est:R2}
\sup_{t\in (0,T)}\int _{\mathbb{R}^3} E(\rho _R|\rho_{\infty})
\mbox{d}x \leq C, \end{equation}
\begin{equation}\label{est:R3}
\|\vu_R - \mathbf{U}_{\infty} \|_{L^2(0,T;W^{1,2}(\mathbb{R}^3))}\le C.
\end{equation}
 Recall that 
 \begin{equation*}
E(\rho_R|\rho_{\infty}) =
H(\rho_R)-H'(\rho_{\infty})(\rho_R-\rho_{\infty})-H(\rho_{\infty}),
\end{equation*}
where
\begin{equation*}
H(\rho) = \rho \int _1^{\rho} \frac{p(s)}{s^2}ds.
\end{equation*}
Then we have 
 \begin{equation}\label{est:ER}
E(\rho_R|\rho_{\infty}) =H''(\xi)|\rho_R-\rho_{\infty}|^2=\frac{p'(\xi)}{\xi}|\rho_R-\rho_{\infty}|^2, \mbox{ for some }\xi\in (0,\overline{\rho}).
\end{equation}
Now, the assumption \eqref{p-law} on pressure \begin{equation*}
 p'(\rho)>0\ \forall\ \rho>0,\quad \liminf_{\rho\rightarrow 0}\frac{p'(\rho)}{\rho}>0,
\end{equation*} 
along with the relations \eqref{est:R2},
\eqref{est:ER} yield
\begin{equation}\label{beh:rhoR}
 \sup_{t\in (0,T)}\int_{\mathbb{R}^3} |\rho_R-\rho_{\infty}|^2\mathds{1}_{\{0\leq \rho_R< \overline{\rho}\}} \leq C. \end{equation}
 Thus, we obtain from \eqref{est:R3} and \eqref{beh:rhoR} that
 \begin{equation}\label{diff:rhoR}
 \rho_R-\rho_{\infty}
 \rightarrow \rho -\rho_{\infty}\quad\mbox{weakly in }L^{\infty}(0,T;L^2(\mathbb{R}^3)),
 \end{equation}
 \begin{equation}\label{diff:uR}
  \vu_R-\mathbf{U}_{\infty}\rightarrow \vu -\mathbf{U}_{\infty}\quad\mbox{weakly in }L^{2}(0,T;W^{1,2}(\mathbb{R}^3)) . 
  \end{equation}
 Let us fix $n\in \mathbb{N}$ and define:
 \begin{equation*}
 \Omega_n:= B_n \setminus \overline{\mc{S}}.
 \end{equation*}
 We consider the test function $\phi_R$ (see \eqref{test1m}--\eqref{def:b}) and use it in the momentum equation \eqref{eq:weakmmom}. We use renormalized equation \eqref{eq:renormR} and follow the same procedure as in step 3 of the proof of \cref{thm:m} to have equi-integrability of the sequence of integrable functions $\{p(\rho_R)\}$ and conclude that
 \begin{equation}\label{fl:pressure}
 p(\rho_R)\rightarrow \overline{\rho(\rho)}\quad\mbox{weakly in }\quad L^1((0,T)\times \Omega_n).
 \end{equation}

 \underline{Step 2: Limit in the continuity equation.}

 We use the estimates \eqref{est:R1}, \eqref{est:R3} and boundedness of $\rho_R$ in $(0,T)\times \mathbb{R}^3$, we have
 \begin{equation}\label{bd:rhouR}
 \|\rho_R \vu_R\|_{L^{\infty}(0,T;L^{2}(\Omega_n))} + \|\rho_R \vu_R\|_{L^2(0,T; L^{\frac{6q}{6+q}}(\Omega_n))} \leq C,\mbox{  for any  }1\leq q <\infty,
 \end{equation}
  \begin{equation}\label{bd:rhouuR}
 \|\rho_R |\vu_R|^2\|_{L^2(0,T; L^{\frac{3}{2}}(\Omega_n))} \leq C.
 \end{equation}
  Furthermore, for any fixed $n\in \mathbb{N}$, we deduce from the continuity equation \eqref{eq:weakmmass} and the estimate \eqref{bd:rhouR} that the sequence of functions $t\mapsto \int\limits_{\Omega_n} \rho_R \varphi$, $\varphi \in C_c^1(\Omega_n)$ is equi-continuous. The density $\rho_R$ is uniformly bounded.  We can apply Arzela-Ascoli theorem \cite[Lemma 6.2, page 301]{MR2084891} to have $\rho_R\rightarrow \rho|_{B_n}$ in $C_{\mbox{weak}}([0,T]; L^q(\Omega_n))\mbox{  for any  }1\leq q <\infty$. Using the diagonalization process, we thus obtain 
  \begin{equation}\label{R:rhoCw} 
 \rho_R\rightarrow \rho\quad \mbox{ in  }\quad C_{\mbox{weak}}([0,T]; L^q(\Omega_n)),\ n\in \mathbb{N},\mbox{  for any  }1\leq q <\infty.
 \end{equation}
 Moreover, the compact embedding of $L^q(\Omega_n)\hookrightarrow\hookrightarrow W^{-1,2}(\Omega_n)$, for $q>6/5$ ensures that we can apply \cite[Lemma 6.4, page 302]{MR2084891} to have 
 \begin{equation}\label{R:rhostr} 
 \rho_R\rightarrow \rho\quad \mbox{ strongly in  }L^{2}(0,T; W^{-1,2}(\Omega_n)),\ n\in \mathbb{N}.
 \end{equation} 
 We combine the convergence \eqref{R:rhoCw}--\eqref{R:rhostr} of density $\rho_R$, convergence \eqref{diff:uR} of $\vu_R$, boundedness \eqref{bd:rhouR} of $\rho_R\vu_R$ and local weak compactness result \cite[Lemma 6.6, page 304]{MR2084891} in unbounded domains to obtain 
 \begin{equation}\label{conv:rhouR}
 \rho_R \vu_R \rightarrow \rho\vu \mbox{ weakly-}*\mbox{ in }{L^{\infty}(0,T;L^{2}(\Omega_n))}\mbox{ and weakly in }  {L^2(0,T; L^{\frac{6q}{6+q}}(\Omega_n))} ,\mbox{  for any  }1\leq q <\infty,\ n\in \mathbb{N}.
 \end{equation}
 Consequently, the convergences \eqref{R:rhoCw}--\eqref{conv:rhouR} enable us to pass the limit $R\rightarrow \infty$ in the equation \eqref{eq:weakmmass} and we obtain:
\begin{equation*}
 \int_\Omega \rho(\tau,\cdot)\varphi(\tau,\cdot)\ {\rm
d}x - \int_\Omega \rho_0(\cdot)\varphi(0,\cdot)\ {\rm
d}x=\int_0^\tau \int_{\Omega}\left(\rho
\partial_t \varphi + \rho\vu \cdot \nabla \varphi\right)
 \ dx\ dt,
\end{equation*}
for all $\tau\in [0,T]$ and any test function $\varphi
\in C^1_{c}([0,T] \times \overline{\Omega})$.

We can establish as in Step 1 of the proof of \cref{thm:m} that the density $\rho$ is bounded and we have already proved that it  satisfies the continuity equation
\eqref{eqf:weakmass}.  We can use the regularization procedure in the transport theory by DiPerna and Lions \cite{DiPerna1989} to show that $(\rho,\vu)$ also satisfy a renormalized continuity equation in a weak sense, i.e,
 \begin{equation}\label{eqf:renorm}
\partial_t b(\rho) + \operatorname{div}(b(\rho)\vu) + (b'(\rho)-b(\rho))\operatorname{div}\vu=0 \mbox{ in }\, \mc{D}'([0,T)\times \overline{\Omega}) ,
\end{equation} 
for any $b\in C([0,\infty)) \cap C^1((0,\infty))$.

 \underline{Step 3: Limit in the momentum equation.}
 
 Fix any $n\in \mathbb{N}$, we use the bounds \eqref{bd:rhouR}--\eqref{bd:rhouuR} and \eqref{fl:pressure} in the momentum equation \eqref{eq:weakmmom} to obtain the equicontinuity of the sequence $t\mapsto \int\limits_{\Omega_n} \rho_R\vu_R \phi$, $\phi\in C_c^1(\Omega_n)$ in $C[0,T]$. Moreover, $\rho_R\vu_R$ is uniformly bounded in $L^2(\Omega_n)$. Again, we apply Arzela-Ascoli theorem \cite[Lemma 6.2, page 301]{MR2084891} to have $\rho_R\vu_R\rightarrow \rho\vu|_{B_n}$ in $C_{\mbox{weak}}([0,T]; L^2(\Omega_n))$. Using the diagonalization process, we arrive at
  \begin{equation}\label{R:rhouCw} 
 \rho_R\vu_R\rightarrow \rho\vu\quad \mbox{ in  }\quad C_{\mbox{weak}}([0,T]; L^2(\Omega_n)),\ n\in \mathbb{N}.
 \end{equation}
 Moreover, the compact embedding of $L^2(\Omega_n)\hookrightarrow\hookrightarrow W^{-1,2}(\Omega_n)$ ensures that we can apply \cite[Lemma 6.4, page 302]{MR2084891} to have 
 \begin{equation}\label{R:rhoustr} 
 \rho_R\vu_R\rightarrow \rho\vu\quad \mbox{ strongly in  }L^{2}(0,T; W^{-1,2}(\Omega_n)),\ n\in \mathbb{N}.
 \end{equation}
  The estimates \eqref{bd:rhouuR}, \eqref{diff:uR} and weak compactness result \cite[Lemma 6.6, page 304]{MR2084891} yield
 \begin{equation}\label{con:rhouuR}
 \rho_R \vu_R \otimes \vu_R \rightarrow \rho\vu \otimes \vu \mbox{ weakly in }L^2(0,T; L^{\frac{3}{2}}(B_n)), \ n\in \mathbb{N}.
  \end{equation}
  We take the limit $R \rightarrow \infty$ in 
the momentum equation \eqref{eq:weakmmom} and use convergences \eqref{R:rhoCw}--\eqref{R:rhostr} of $\rho_R$, convergences \eqref{R:rhouCw}--\eqref{R:rhoustr} , convergence \eqref{con:rhouuR} of $\rho_R \vu_R \otimes \vu_R$ and convergence \eqref{fl:pressure} of pressure to conclude that: \begin{multline}\label{limitmom-R}
\int_\Omega\rho\vu(\tau,\cdot)\cdot\varphi(\tau,\cdot){\rm d} x - \int_\Omega \mathbf{q}_0(\cdot)\cdot\varphi(0,\cdot){\rm d} x  \\
 =\int_0^\tau \int_{\Omega}\Big(
\rho \vu \cdot \partial_t \varphi + \rho \vu
\otimes \vu : \nabla \varphi +
\overline{p(\rho)}\operatorname{div}\varphi - \mathbb {S}(\nabla \vu) : \nabla \varphi \Big)\ dx \ dt,
\end{multline}
for all $\tau\in [0,T]$ and
 for any test
function $\varphi \in C^1_c([0,T] \times \Omega)$.

 \underline{Step 4: Identify the limit of the pressure.}

In this step, we want to show that 
 \begin{equation*}
 \overline{p(\rho)}=p(\rho).
\end{equation*}
In order to show this, we need to use the idea of \textit{effective viscous flux} as explained in Step 4 of the proof of \cref{thm:m}. We can establish the identity
 \begin{equation}\label{eqf:EVF}
 (2\mu+\lambda) \left(\overline{\rho\operatorname{div}\vu}-\rho\operatorname{div}\vu \right) =   \left(\overline{p(\rho) \rho}-\overline{p(\rho)}\rho\right),
 \end{equation}
 where the quantity $\left(p(\rho)-(2\mu+\lambda)\operatorname{div}\vu\right)$ is termed as \textit{effective viscous flux}. Now we consider $b(\rho_R)=\rho_R\log\rho_R$ in \eqref{eq:renormR}  letting $R\rightarrow \infty$ to get
 \begin{align}
 \int_{\Omega} \overline{\rho \log \rho}(\tau,\cdot)\varphi(\tau,\cdot)\ {\rm
d}x - \int_{\Omega} \rho_0\log\rho_0(\cdot)\varphi(0,\cdot)\ {\rm
d}x\notag\\
=\int_0^\tau \int_{\Omega}\left(\overline{\rho \log \rho}\
\partial_t \varphi + \overline{\rho \log \rho}\ \vu \cdot \nabla \varphi - \overline{\rho\operatorname{div}\vu}\cdot \varphi\right)
 \ dx\ dt,\label{renorm:Rinf}
\end{align}
for all $\tau\in [0,T]$ and any test function $\varphi
\in C^1_{c}([0,T] \times \overline{\Omega})$. We take $b(\rho)=\rho\log\rho$ in the equation \eqref{eqf:renorm} and subtract it from \eqref{renorm:Rinf} to obtain
\begin{multline}\label{renormf:diff}
 \int_{\Omega} \left(\overline{\rho \log \rho}-{\rho \log \rho}\right)(\tau,\cdot)\varphi(\tau,\cdot)\ {\rm
d}x \\
=\int_0^\tau \int_{\Omega}\left[(\overline{\rho \log \rho}-{\rho \log \rho})\
\partial_t \varphi + (\overline{\rho \log \rho}-{\rho \log \rho})\ \vu \cdot \nabla \varphi - (\overline{\rho\operatorname{div}\vu}-{\rho\operatorname{div}\vu})\cdot \varphi\right]
 \ dx\ dt,
\end{multline}
for all $\tau\in [0,T]$ and any test function $\varphi
\in C^1_{c}([0,T] \times \overline{\Omega})$. Without loss of generality, we suppose that $\mathbf{a}_{\infty}$, the prescribed  behaviour of $\mathbf{u}$ at infinity (see \eqref{behaviour}) is of the following form:
\begin{equation*}
\mathbf{a}_{\infty}=(a_{\infty},0,0),\quad a_{\infty}>0.
\end{equation*}
We define
\begin{equation}\label{def:PhiR1}
\Phi_R (x):= \eta\left(\frac{x_1}{R}\right)\zeta\left(\frac{x'}{R^{\alpha}}\right),\quad x'=(x_2,x_3),\quad R>1,\quad \alpha>0,
\end{equation}
where 
\begin{equation}\label{def:PhiR2}
\eta \in C_c^{\infty}(\mathbb{R}),\quad 0\leq \eta \leq 1,\quad \eta(s)=\begin{cases}
1\mbox{ if }|s|\leq 1,\\ 0 \mbox{ if }|s|\geq 2,
\end{cases}
\quad \zeta \in C_c^{\infty}(\mathbb{R}^2),\quad 0\leq \zeta \leq 1,\quad \zeta(s)=\begin{cases}
1\mbox{ if }|x'|\leq 1,\\ 0 \mbox{ if }|x'|\geq 2.
\end{cases}
\end{equation}
We take $\Phi_R$ as the test function in \eqref{renormf:diff} and use the \textit{effective viscous flux} identity
\eqref{eqf:EVF} to get \begin{equation}\label{renormf:diffR}
 \int_{\Omega} \left(\overline{\rho \log \rho}-{\rho \log \rho}\right)(\tau,\cdot)\Phi_R(\cdot)\ {
d}x + \int_0^\tau \int_{\Omega}(\overline{p(\rho) \rho}-\overline{p(\rho)}\rho)\cdot \Phi_R\ dx \ dt
=\int_0^\tau \int_{\Omega} (\overline{\rho \log \rho}-{\rho \log \rho})\ \vu \cdot \nabla \Phi_R 
 \ dx\ dt.
\end{equation}
Since $p(\rho)$ is strictly increasing on $(0,\infty)$, we can use the result  \cite[Lemma 3.35, Page 186]{MR2084891} on weak convergence and monotonicity to get: 
  \begin{equation*}
 \overline{p(\rho)\rho} \geq \overline{p(\rho)}\rho\quad\mbox{ a.e. in }\Omega.
 \end{equation*}
The above relation and \eqref{renormf:diffR} yield
\begin{equation}\label{rhofinal1}
 \int_{\Omega} \left(\overline{\rho \log \rho}-{\rho \log \rho}\right)(\tau,\cdot)\Phi_R(\cdot)\ {
d}x
\leq \int_0^\tau \int_{\Omega} (\overline{\rho \log \rho}-{\rho \log \rho})\ \vu \cdot \nabla \Phi_R 
 \ dx\ dt.
\end{equation}
Since the function $\rho \mapsto \rho\log\rho$ is a convex lower semi-continuous function on $(0,T)$ with 
 \begin{equation*}
 \rho_R\log\rho_R \rightarrow \overline{\rho\log\rho}\quad\mbox{ weakly in }\quad L^1(\Omega),
 \end{equation*}
 we can apply the result \cite[Corollary 3.33, Page 184]{MR2084891} from convex analysis to conclude
 \begin{equation}\label{11:51}
 \rho\log\rho \leq \overline{\rho\log\rho}\quad\mbox{ a.e. in }\Omega.
 \end{equation}
Furthermore, we estimate the right-hand side of \eqref{rhofinal1} in the following way
\begin{multline}\label{rediff1}
 \int_0^\tau \int_{\Omega} (\overline{\rho \log \rho}-{\rho \log \rho})\ \vu \cdot \nabla \Phi_R 
 \ dx\ dt\\ \leq \int_0^\tau \int_{\Omega} |\overline{\rho \log \rho}-{\rho_{\infty} \log \rho_{\infty}}|\ \vu \cdot \nabla \Phi_R 
 \ dx\ dt + \int_0^\tau \int_{\Omega} |{\rho_{\infty} \log \rho_{\infty}}-{\rho \log \rho}|\ \vu \cdot \nabla \Phi_R 
 \ dx\ dt\\ \leq
 \int_0^\tau \int_{\Omega} |\overline{\rho \log \rho}-{\rho_{\infty} \log \rho_{\infty}}|\ (\vu-\mathbf{U}_{\infty}) \cdot \nabla \Phi_R 
 \ dx\ dt + \int_0^\tau \int_{\Omega} |\overline{\rho \log \rho}-{\rho_{\infty} \log \rho_{\infty}}|\ \mathbf{U}_{\infty} \cdot \nabla \Phi_R 
 \ dx\ dt \\
 +\int_0^\tau \int_{\Omega} |{\rho_{\infty} \log \rho_{\infty}}-{\rho \log \rho}|\ (\vu-\mathbf{U}_{\infty}) \cdot \nabla \Phi_R 
 \ dx\ dt + \int_0^\tau \int_{\Omega} |{\rho_{\infty} \log \rho_{\infty}}-{\rho \log \rho}|\ \mathbf{U}_{\infty} \cdot \nabla \Phi_R 
 \ dx\ dt.
\end{multline}
Observe that the construction of test function $\Phi_R$ in \eqref{def:PhiR1} yield
\begin{equation}\label{prop:PhiR1}
|\mbox{supp}\nabla \Phi_R|\leq CR^{1+2\alpha},\quad \|\nabla \Phi_R\|_{L^p(\mathbb{R}^3)}\leq CR^{\frac{1+2\alpha-\alpha p}{p}},
\end{equation}
\begin{equation}\label{prop:PhiR2}
|\partial_{x_1} \Phi_R|\leq \frac{C}{R},\quad \|\nabla_{x'} \Phi_R\|\leq \frac{C}{R^{\alpha}}.
\end{equation}
Moreover, 
\begin{equation*}
\lim_{s\rightarrow \rho_{\infty}} \frac{s\log s - \rho_{\infty}\log \rho_{\infty}}{s-\rho_{\infty}}= 1 +\log\rho_{\infty},\quad \lim_{s\rightarrow {\infty}} \frac{s\log s - \rho_{\infty}\log \rho_{\infty}}{|s-\rho_{\infty}|^q}=0 \mbox{ for any }1< q < \infty,
\end{equation*}
implies 
\begin{align}
|{\rho_{\infty} \log \rho_{\infty}}-{\rho \log \rho}|&= |{\rho_{\infty} \log \rho_{\infty}}-{\rho \log \rho}|\mathds{1}_{\{|\rho-\rho_{\infty}|\leq 1\}} + |{\rho_{\infty} \log \rho_{\infty}}-{\rho \log \rho}|\mathds{1}_{\{|\rho-\rho_{\infty}|\geq 1\}}\notag\\
&\leq 
C\left(|\rho-\rho_{\infty}|\mathds{1}_{\{|\rho-\rho_{\infty}|\leq 1\}} + |\rho-\rho_{\infty}|^q\mathds{1}_{\{|\rho-\rho_{\infty}|\geq 1\}}\right)\label{13:16}
\end{align}
Then, we can estimate the terms in the right-hand side of \eqref{rediff1} by using H\"{o}lder inequality, properties \eqref{prop:PhiR1}--\eqref{prop:PhiR2} of test function $\Phi_R$ and the estimates \eqref{13:16}, \eqref{est:R3} and \eqref{beh:rhoR} to obtain
\begin{multline}\label{RHS1}
 \int_{\Omega} |{\rho_{\infty} \log \rho_{\infty}}-{\rho \log \rho}|\ (\vu-\mathbf{U}_{\infty}) \cdot \nabla \Phi_R 
 \ dx  \\
 \leq  \|\nabla \Phi_R\|_{L^6(\Omega)}\|\vu-\mathbf{U}_{\infty}\|_{L^6(\Omega)}\|{\rho_{\infty} \log \rho_{\infty}}-{\rho \log \rho}\|_{L^{3/2}(\Omega)}\mathds{1}_{\{|\rho-\rho_{\infty}|\geq 1\}} \\+ \|\nabla \Phi_R\|_{L^{\infty}(\Omega)}\|\vu-\mathbf{U}_{\infty}\|_{L^2(\Omega)}\|{\rho_{\infty} \log \rho_{\infty}}-{\rho \log \rho}\|_{L^{2}(\Omega)}\mathds{1}_{\{|\rho-\rho_{\infty}|\leq 1\}} \leq C\left(R^{\frac{1-4\alpha}{6}} + \frac{1}{R^{\alpha}}\right),
\end{multline}
 \begin{multline}\label{RHS2}
 \int_{\Omega} |{\rho_{\infty} \log \rho_{\infty}}-{\rho \log \rho}|\ \mathbf{U}_{\infty} \cdot \nabla \Phi_R 
 \ dx  \\
 \leq  \|\partial_{x_1} \Phi_R\|_{L^{\infty}(\Omega)}\|\mathbf{U}_{\infty}\|_{L^{\infty}(\Omega)}\|{\rho_{\infty} \log \rho_{\infty}}-{\rho \log \rho}\|_{L^{1}(\Omega)}\mathds{1}_{\{|\rho-\rho_{\infty}|\geq 1\}} \\+ \|\partial_{x_1} \Phi_R\|_{L^{\infty}(\Omega)}\|\mathbf{U}_{\infty}\|_{L^{\infty}(\Omega)}\|{\rho_{\infty} \log \rho_{\infty}}-{\rho \log \rho}\|_{L^{2}(\Omega)}\mathds{1}_{\{|\rho-\rho_{\infty}|\leq 1\}}|\mbox{supp}\nabla \Phi_R|^{1/2} \leq CR^{\frac{2\alpha-1}{2}}.
\end{multline}
Thus, if we choose $\alpha\in \left(\frac{1}{4},\frac{1}{2}\right)$ and take $R\rightarrow \infty$, both the terms in \eqref{RHS1}--\eqref{RHS2} converge to zero. We can treat the other two terms in the right-hand side of \eqref{rediff1} in the same way and combining with the inequality \eqref{11:51}, we conclude that 
\begin{equation*}
 \rho\log\rho = \overline{\rho\log\rho}\quad\mbox{ a.e. in }\Omega.
 \end{equation*}
 Since the function $\rho \mapsto \rho\log\rho$ is strictly convex on $[0,\infty)$, we obtain that 
  \begin{equation}\label{lim:rhoag}
 \rho_R \rightarrow {\rho}\quad\mbox{ a.e. in }(0,T)\times \Omega.
 \end{equation}
In particular, we have $\overline{p(\rho)}=p(\rho)$. The substitution of this relation in \eqref{limitmom-R} yields the limiting momentum equation : for all $\tau\in [0,T]$ and
 for any test
function $\varphi \in C^1_c([0,T] \times \Omega)$,
\begin{multline*}
\int_\Omega\rho\vu(\tau,\cdot)\cdot\varphi(\tau,\cdot){\rm d} x - \int_\Omega \mathbf{q}_0(\cdot)\cdot\varphi(0,\cdot){\rm d} x  \\
 =\int_0^\tau \int_{\Omega}\Big(
\rho \vu \cdot \partial_t \varphi + \rho \vu
\otimes \vu : \nabla \varphi +
p(\rho)\operatorname{div}\varphi - \mathbb {S}(\nabla \vu) : \nabla \varphi \Big)\ dx \ dt,
\end{multline*}
Hence, we have verified the momentum equation \eqref{eqf:weakmom} and it only remains to establish the energy inequality \eqref{eqf:ee}.

 \underline{Step 5: Energy inequality.} Let us recall the energy inequality \eqref{m:energy} for the $R$-th level approximate problem: for a.e. $\tau\in (0,T)$,
\begin{multline*}
\int_{\Omega_R}\Big(\frac{1}{2}\rho_R|\vu_R-\mathbf {u}_{\infty}|^2 +
E(\rho_R|\rho_{\infty})\Big)(\tau)\ dx
+ \int_0^\tau\int_{\Omega_R} \mathbb{S}(\nabla (\vu_R-\vu_{\infty})):\nabla (\vu_R-\vu_{\infty})\ dx\ dt\\
\leq
  \int_{\Omega_R}\Big(\frac{1}{2}\frac{|\mathbf{q}_0-\rho_{0}\mathbf{u}_{\infty}|^2}{\rho_0} +
E(\rho_0|\rho_{\infty})\Big)\ dx + \int_0^\tau
\int_{B_1\setminus\mc{S}} \rho\vu\cdot\nabla\mathbf {u}_{\infty}
\cdot(\mathbf{u}_{\infty}- \vu)\,{ d}x\,{ d}t - \int _0^{ \tau}\int _{B_1\setminus\mc{S}}\mathbb{S}(\nabla\vu_{\infty}):\nabla(\vu-\vu_\infty)\,{ d} x\, { d}t.
\end{multline*}
We use
\begin{itemize}
\item the definition \eqref{def:Uinf} of   $\mathbf{U}_{\infty}$, 
\item the convergences obtained for $\rho_R$, $\vu_R$, $\rho_R\vu_R$, \item the lower semi-continuity of the convex functionals at the left-hand side of \eqref{m:energy},
\end{itemize}
and take the limit $R \rightarrow \infty$ in \eqref{m:energy} to obtain
\begin{multline*}
\int_{\Omega}\Big(\frac{1}{2}\rho|\vu-\mathbf {U}_{\infty}|^2 +
E(\rho|\rho_{\infty})\Big)(\tau)\ dx
+ \int_0^\tau\int_{\Omega} \mathbb{S}(\nabla (\vu-\mathbf{U}_{\infty})):\nabla (\vu-\mathbf{U}_{\infty})\ dx\ dt\\
\leq
  \int_{\Omega}\Big(\frac{1}{2}\frac{|\mathbf{q}_0-\rho_{0}\mathbf{U}_{\infty}|^2}{\rho_0} +
E(\rho_0|\rho_{\infty})\Big)\ dx - \int_0^\tau
\int_{B_1\setminus\mc{S}} \rho\vu\cdot\nabla\mathbf {U}_{\infty}
\cdot(\vu - \mathbf{U}_{\infty})\,{ d}x\,{ d}t - \int _0^{ \tau}\int _{B_1\setminus\mc{S}}\mathbb{S}(\nabla\mathbf{U}_{\infty}):\nabla(\vu-\mathbf{U}_\infty)\,{ d} x\, { d}t.
\end{multline*}
for a.e. $\tau\in (0,T)$. Thus we have established the energy inequality \eqref{eqf:ee} and hence the existence of at least one renormalized bounded energy weak solution $(\rho,\vu)$ of the problem \eqref{eq:mass}--\eqref{p-law} according to \cref{def:bddenergy}.
\end{proof} 

 \section*{Acknowledgment}
{\it \v S. N. and A. R. have been supported by the Czech Science Foundation (GA\v CR) project GA19-04243S. The Institute of Mathematics, CAS is supported by RVO:67985840. The work of A.N.  was partially supported by the distinguished Eduard \v Cech visiting program at the
Institute of Mathematics of the Academy of Sciences of  the Czech Republic.}
 \bibliography{reference}
\bibliographystyle{siam}

\end{document}